\newtheorem{theorem}{Theorem}[section]
\newtheorem{lemma}[theorem]{Lemma}
\newtheorem{cor}[theorem]{Corollary}
\theoremstyle{definition}
\newtheorem{definition}[theorem]{Definition}
\newtheorem{example}[theorem]{Example}
\theoremstyle{remark}
\numberwithin{equation}{section}
\newcommand{\uc}{\mathbb{S}}
\newcommand{\sha}{\succ\mkern-14mu_s\;}
\begin{document}

\date{August 16, 2019}

\title[ VERY BADLY ORDERED CYCLES OF INTERVAL MAPS]
{ VERY BADLY ORDERED CYCLES OF INTERVAL MAPS}

\dedicatory{Dedicated to the occasion of Professor A. N. Sharkovsky 82-nd birthday}

\author{Sourav Bhattacharya}

\author{Alexander Blokh}

\address[Sourav Bhattacharaya and Alexander Blokh]
{Department of Mathematics\\ University of Alabama at Birmingham\\
	Birmingham, AL 35294}

\subjclass[2010]{Primary 37E05, 37E15; Secondary 37E45}

\keywords{Over-rotation pair, over-rotation number, pattern, periodic orbit}

\begin{abstract}

We prove that a periodic orbit $P$ with coprime over-rotation pair is
an over-twist periodic orbit iff the $P$-linear map has the over-rotation
interval with left endpoint equal to the over-rotation number
of $P$. We show that this fails if the over-rotation pair
of $P$ is not coprime. We give examples of patterns with non-coprime
over-rotation pairs, no block structure over over-twists, and with  over-rotation number equal to the
left endpoint of the forced over-rotation interval (call them
\emph{very badly ordered}, similar to patterns of degree one circle maps in
\cite{alm98}). This presents a situation in which the results about
over-rotation numbers on the interval and those about classical
rotation numbers for circle degree one maps are different. In the end we explicitly describe the strongest unimodal pattern that forces a given over-rotation interval and use it to construct unimodal very badly ordered patterns with arbitrary non-coprime over-rotation pairs.

\end{abstract}

\maketitle

\section*{Introduction }

One-dimensional dynamics is concerned with the asymptotic behavior of
sequences $\{x_n\}$ defined iteratively by $x_{n+1} = f(x_n)$ where $f$
is an arbitrary continuous map of an interval to itself. The sequence
$\{x_n\}$ is called the \emph{trajectory} of the \emph{initial} point
$x_0$ under the map $f$. An important reason for studying one
dimensional dynamics, in addition to its intrinsic interest, comes from
higher-dimensional dynamics.

If in the \emph{trajectory} $\{x_n \}$ we have $x_p = x_0$ for some
minimal $p>0$, then $x_{n+p}=x_n$ for every $n>0$. Thus, the trajectory
$\{x_n\}$ is \emph{periodic}; the trajectory and its initial point
$x_0$ are then said to be of \emph{period} $p$ (evidently, all points
of this trajectory are then of the same period), and the set $\{x_0,
\dots, x_{p-1}\}$ is then called a \emph{cycle} or a \emph{periodic
orbit (of the point $x_0$)}. It turns out that a continuous interval
map $f$ has periodic points of periods that are related to one another.
A complete description of all possible sets of periods of periodic
points of a continuous map of an interval is due to A. N. Sharkovsky.

The celebrated Sharkovsky Theorem \cite{S} illustrates the rigid
restrictions on the set of periods of periodic orbits of a continuous
interval map. To state it let us first introduce the \emph{Sharkovsky
ordering} for the set $\mathbb{N}$ of positive integers:
$$3\sha 5\sha 7\sha\dots\sha 2\cdot3\sha 2\cdot5\sha 2\cdot7 \sha \dots $$
$$\sha\dots 2^2\cdot3\sha 2^2\cdot5\sha 2^2\cdot7\sha\dots\sha 8\sha
4\sha 2\sha 1.$$
If $m\sha n$, say that $m$ is {\it sharper} than
$n$. Denote by $Sh(k)$ the set of all positive integers $m$ such that
$k\sha m$, together with $k$, and by $Sh(2^\infty)$ the set
$\{1,2,4,8,\dots\}$ which includes all powers of $2$. Denote also by
$P(f)$ the set of the periods of cycles of a map $f$. The following theorem
was proven by A. N. Sharkovsky.

\begin{theorem}[\cite{S}]\label{t:shar}
 	If $f:[0,1]\to [0,1]$ is a continuous map, $m\sha n$ and $m\in
 	P(f)$, then $n\in P(f)$. Therefore there exists $k \in \mathbb{N}
 	\cup \{2^\infty\}$ such that $P(f)=Sh(k)$. Conversely, if $k\in
 	\mathbb{N}\cup \{2^\infty\}$ then there exists a continuous map
 	$f:[0,1]\to [0,1]$ such that $P(f)=Sh(k)$. 		
\end{theorem}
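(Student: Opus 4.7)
The plan is to prove the theorem in two parts: the forcing direction (if $m \sha n$ and $m \in P(f)$ then $n \in P(f)$) and the realization direction (every tail $Sh(k)$ is attained by some continuous interval map).

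For the forcing direction I would work with covering relations. Call a closed interval $I$ an \emph{$f$-cover} of a closed interval $J$, written $I \to J$, if $f(I) \supseteq J$. An elementary application of the intermediate value theorem gives the \emph{loop lemma}: a cycle $I_0 \to I_1 \to \cdots \to I_{n-1} \to I_0$ of intervals yields a point $x \in I_0$ with $f^i(x) \in I_i$ for all $i<n$ and $f^n(x)=x$, and under mild non-degeneracy this point has exact period $n$. Now, given a cycle $P = \{p_0 < p_1 < \cdots < p_{m-1}\}$ of period $m$, the intervals $J_i = [p_i, p_{i+1}]$ form a Markov partition of the convex hull of $P$, and the $f$-covering relations among the $J_i$'s encode the combinatorial pattern of $P$. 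The heart of the argument, due to \v{S}tefan, is to show that when $m$ is odd and $m \ge 3$, one can always locate --- either inside $P$ itself or inside a subcycle forced by $P$ --- a \emph{\v{S}tefan cycle} whose Markov graph contains loops of length $n$ for each $n$ with $m \sha n$. For even $m = 2^k q$ with $q$ odd, an inductive reduction using $f^{2^k}$ on an appropriate invariant subinterval reduces to the odd case, and for $m$ a power of $2$, the same doubling idea extracts shorter cycles by looking at the fixed points of suitable iterates.

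For the realization direction I would construct explicit examples. For each odd $k \ge 3$, a carefully truncated piecewise linear tent-like map with controlled slope realizes $P(f) = Sh(k)$; a \v{S}tefan cycle of period $k$ appears as the "tightest" orbit and the choice of slope prevents any period sharper than $k$. For $k = 2^j q$ with $q$ odd, one nests $j$ period-doubling layers around such a map by a standard doubling substitution. For $k = 2^\infty$, a Feigenbaum-type map obtained as the limit of a period-doubling cascade has periodic points of periods exactly $1,2,4,8,\dots$. For $k = 2^j$ one truncates the Feigenbaum construction at level $j$, and for $k=1$ any strictly increasing map with a single fixed point suffices.

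The main obstacle I anticipate is the combinatorial engine of the forcing part --- \v{S}tefan's lemma itself. Identifying, inside an arbitrary Markov graph arising from an orbit of period $m$, the specific loops that realize every sharper period requires a careful analysis of the permutation that $f$ induces on the intervals $J_i$: one must locate the $J_i$ containing a fixed point of $f$, track which intervals map across many others, and then read off loops of prescribed length through this "long" edge structure. Once that combinatorial backbone is in place, both the forcing conclusion and the verification that the realization examples have exactly the advertised set of periods follow from fairly routine interval-map estimates.
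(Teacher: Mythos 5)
Your proposal is a correct outline of the classical proof of Sharkovsky's theorem, but it takes a different route from the one the paper sketches. The paper does not reprove the theorem from scratch (it is cited to \cite{S}); rather, in the Introduction it indicates how the forcing direction falls out of the over-rotation machinery of \cite{BM1}: given a period-$(2n+1)$ cycle, the closest-to-$\frac12$ over-rotation number it can carry is $\frac{n}{2n+1}$, and since $\frac{n}{2n+1}<\frac{n+1}{2n+3}<\frac12$ and $\frac{n}{2n+1}<\frac12$, the linear forcing order on over-rotation pairs from \cite{BM1} immediately yields cycles of periods $2n+3$ and of every even period $2m$; one then iterates on $f^2, f^4,\dots$ to pick up the periods $2^k\cdot\mathrm{odd}$ and handles pure powers of $2$ by a short separate argument. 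Your approach is instead the classical one of \v{S}tefan/Alsed\`a--Llibre--Misiurewicz: build the Markov graph on the $P$-basic intervals of a period-$m$ orbit, locate the long edge through the interval containing the fixed point, extract a \v{S}tefan cycle, and read off loops of every required length; then, for the converse, build explicit piecewise-linear (and period-doubled, and Feigenbaum-limit) examples for each tail $Sh(k)$. Both routes are correct and standard. Yours is more elementary and self-contained (no over-rotation theory), and it alone addresses the realization direction, which the paper's sketch omits entirely; the paper's route is much shorter once the \cite{BM1} forcing order is available and, more importantly, fits the theme of the paper by presenting Sharkovsky's theorem as a coarse shadow of the finer over-rotation forcing order that the rest of the paper is actually about.

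One small point of rigor to watch in your plan: the loop lemma gives a point $x$ with $f^n(x)=x$ following the prescribed intervals, but the ``mild non-degeneracy'' ensuring exact period $n$ needs care when the loop is a repetition of a shorter loop or when endpoints of the intervals coincide. In the \v{S}tefan-cycle argument this is usually handled by choosing loops whose symbolic itinerary is non-repetitive, exactly as in the paper's Lemma~\ref{BM3}(2); make sure your chosen loops through the long edge have this property so that the induced periodic point has exact period $n$ rather than a proper divisor of $n$.
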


The above theorem provides a full description of possible sets of
periods of cycles of continuous interval maps. Moreover, it shows that
various periods \emph{force} one another in the sense that if $m \sha
n$ then, for a continuous interval map $f$, the existence of a cycle of
period $m$ \emph{forces} the existence of a cycle of period $n$. While
the statement of Sharkovsky Theorem focuses on the periods of cycles of
interval maps, its various proofs indicate a rich combinatorial
structure controlling the disposition of the orbits themselves. This
led to an explosion of interest to interval maps and gave birth to the
field of Combinatorial One-Dimensional Dynamics (see \cite{alm00}).

However, the period is a rough characteristic of a cycle as there are a
lot of cycles of the same period. A much finer way of describing cycles
is by considering its \emph{(cyclic) permutation}, that is, the cyclic
permutation that we get when we look at how the map acts on the points
of the cycle, ordered from the left to the right. In what follows we
often identify cyclic permutations with families of all cycles on the
real line that induce such permutations and called \emph{patterns} and
use the two terms ``permutation'' and ``pattern'' interchangeably.
Patterns are partially ordered by the \emph{forcing relation}. A
pattern $A$ \emph{forces} pattern $B$ if every continuous map having a
cycle of pattern $A$ has a cycle of pattern $B$. By \cite{Ba} forcing
is a partial ordering. Thus, if we know which patterns are forced by a
given pattern $A$, we have enormous information about the structure of
an interval map with a cycle of pattern $A$.

A useful algorithm allows one to describe all patterns forced by a
given pattern $A$. Namely, consider a cycle $P$ of pattern $A$; assume
that the leftmost point of $P$ is $a$ and the rightmost point of $P$ is
$b$. Every component of $[a, b]\setminus P$ is said to be a
\emph{$P$-basic interval}. Extend the map from $P$ to the interval $[a,
b]$ by defining it linearly on each $P$-basic interval and call the
resulting map $f_P$ the \emph{$P$-linear map}. Then the patterns of all
cycles of $f_P$ are exactly the patterns forced by the pattern of $P$
(see \cite{Ba} and \cite{alm00}).

As it turns out, the forcing relation is rather complicated.  This
motivates one to look for another, middle-of-the-road way of describing
cycles, a way not as crude as periods but not as fine as permutations,
which would still allow for a transparent description.

In \cite{BM1} a new notion of a type of a cycle, the
\emph{over-rotation pair}, was introduced. If $P$ is a cycle of $f$ of
period $q$ , then the over-rotation pair of $P$ is $orp(P)=(p,q)$,
where $2p$ is the number of points $x\in P$ such that $f(x)-x$ and
$f^2(x) - f(x)$ have different signs (if $f$ has only one fixed point
$a$, then it is easy to check that $p$ is equal to the number of points
$x\in P$ such that $x>a$ and $f(x)<a$). The number
$\frac{p}{q}=\rho(P)$ is called the \emph{over-rotation number} of $P$.
Similarly one defines the over-rotation pair $orp(\pi)$ and number
$\rho(\pi)$ of a pattern $\pi$; in fact in what follows we will often
define combinatorial concepts for cycles assuming by default that the
same concept can be similarly defined for patterns. It turns out that
the forcing relation among over-rotation pairs is also linear (as for
periods) so that the family of all over-rotation pairs of a given
interval map can be easily described. On the other hand, one gets much
more information from the over-rotation pair than from the period
alone. Therefore, one may consider over-rotation pairs as a good
compromise between patterns and periods. We continue to investigate
them in the present paper.


Namely, by \cite{BM1} if $(p, q)$ and $(r, s)$ are over-rotation pairs
such that $p/q<r/s$ then any interval map with cycle of over-rotation
pair $(p, q)$ must have a cycle of over-rotation pair $(r, s)$. Similar
to the case of periods one can say that $(p, q)$ \emph{forces} $(r,
s)$. The set of all over-rotation pairs of an interval map $f$ is
denoted $ORP(f)$; similar to how Theorem \ref{t:shar} implies a full
description of all possible sets of periods of cycles of $f$, results
of \cite{BM1} imply a full description of all possible sets $ORP(f)$
for continuous interval maps $f$. Observe that over-rotation pairs and
numbers are only defined for non-fixed periodic points; hence, if a
continuous interval map $f$ has no periodic non-fixed points, the
over-rotation pairs and numbers are not defined for such a map. On the
other hand, it is well-known that any such map $f$ has trivial
dynamics: for any point $x$ the limit set of $x$ is a fixed point. So,
from now on we study only maps with some non-fixed periodic points.

Observe that any over-rotation number is at most equal to
$\frac{1}{2}$. Moreover, if $f$ has non-fixed periodic points then by
Theorem \ref{t:shar} it has a point of period $2$. Thus, the
over-rotation pair $(1, 2)$ always belongs to $ORP(f)$. Hence by
\cite{BM1} the closure of the set of all over-rotation numbers of
cycles of $f$ is an interval $I_f=[r_f, 1/2]$ called the
\emph{over-rotation} interval of $f$. Given a pattern $\pi$, we say
that it forces the over-rotation interval $I_\pi=[r_\pi, 1/2]$ defined
as the over-rotation interval of the $P$-linear map $f_P$ where $P$ is
a cycle of pattern $\pi$. It follows from the above description of the
patterns forced by a given pattern that $I_\pi$ is well-defined.
Moreover, by \cite{BM1} we have $r_\pi \le \rho(\pi)\le 1/2$. In the
present paper we study the ``extreme'' case when $r_\pi=\rho(\pi)$.

Results of \cite{BM1} imply Theorem~\ref{t:shar}. Indeed, let $f$ be an
interval map and consider odd periods. For any $2n+1$ the closest to
$1/2$ over-rotation number of a periodic point of period $2n+1$ is
$\frac{n}{2n+1}$. Clearly $\frac{n}{2n+1}<\frac{n+1}{2n+3}<\frac12$.
Hence for any periodic point $x$ of period $2n+1$ its over-rotation
pair
$orp(x)$ is $\gtrdot$-stronger than the pair $(n+1,2n+3)$, 
and by \cite{BM1} the map $f$ has a point of period $2n+3$. Also, for
any $m$ we have $(n,2n+1)\gtrdot (m,2m)$, so by \cite{BM1} the
map $f$ has a point of any even period $2m$. Applying this to the maps
$f,f^2,f^4,\dots$ one can prove Theorem~\ref{t:shar} for all periods
but the powers of $2$; additional arguments covering the case of powers
of $2$ are quite easy.

Let us now describe our plans in more detail. Forcing-minimal patterns
among patterns of a given over-rotation number are called
\emph{over-twist patterns} or just \emph{over-twists}. By \cite{BM1},
an over-twist pattern has a coprime over-rotation pair; in particular,
over-twists of over-rotation number $1/2$ are of period $2$, so from
now on we consider over-twists of over-rotation numbers distinct from
$1/2$. By \cite{BM1} and properties of the forcing relation, if an
interval map $f$ has the over-rotation interval $I_f=[r_f, 1/2]$, then
for any $p/q\in (r_f, 1/2]$ there exists a cycle of $f$ that exhibits
an over-twist pattern of over-rotation number $p/q$. Loosely speaking,
over-twists are patterns that are \emph{guaranteed} to a map $f$ if its
over-rotation interval contains the appropriate over-rotation number.
Moreover, again by \cite{BM1}, if $\pi$ is an over-twist pattern then
it forces the over-rotation interval $[\rho(\pi), 1/2]$ (an over-twist
cannot force a pattern of smaller over-rotation number). So, for
over-twists the desired equality $r_\pi=\rho(\pi)$ holds. We prove a
version of the opposite statement too; more precisely, we prove that a
pattern $\pi$ with \emph{coprime} over-rotation pair (i.e.,
over-rotation pair $(p, q)$  such that $p$ and $q$ are coprime) is an
over-twist \emph{if and only if} $r_\pi=\rho(\pi)$ (the ``only if''
direction has been discussed above) and then study the case of
non-coprime over-rotation pairs.

In fact, there is a natural class of patterns with non-coprime
over-rotation  pairs related to over-twists for whom the same holds. Set $T_n=\{1, 2, \dots, n\}$. A
permutation $\pi$ is said to have a {\it block structure} if there is a
collection of pairwise disjoint segments $I_0, \dots, I_k$ with
$\pi(T_n\cap I_j)=T_n\cap I_{j+1}, \pi(T_n\cap I_k)=T_n\cap I_0$; the
intersections of $T_n$ with intervals $I_j$ are called {\it blocks} of
$\pi$. If we collapse blocks to points, we get a new permutation
$\pi'$, and then $\pi$ is said to have a block structure {\it over
$\pi'$}. A permutation without a block structure is said to have
\emph{no block structure}, or, equivalently, to be {\it irreducible}.
If $\pi$ has a block structure over a pattern $\theta$, then $\pi$
forces $\theta$. By \cite{MN} for each permutation $\pi$ there exists a
unique irreducible pattern $\pi'$ over which $\pi$ has block structure
(thus, $\pi'$ is forced by $\pi$).

Coming back to over-rotation numbers, observe that if a permutation
$\pi$ has a block structure over a non-degenerate pattern $\zeta$ then
it is easy to see that $\rho(\pi)=\rho(\zeta)$ and $r_\pi=r_\zeta$.
Hence if $\zeta$ is an over-twist then $\pi$ with block structure over
$\zeta$ also has the desired property $\rho(\pi)=r_\pi$.

In the present paper we study \emph{other} patterns $\pi$ such that
$I_\pi=[\rho(\pi), 1/2]$; in other words, we want patterns $\pi$ that
force only patterns of equal or greater over-rotation numbers but do
\emph{not} have block structure over over-twists of the same
over-rotation number. Surprisingly, it turns out that there exist such
patterns; this provides an example when the situation for the
over-rotation numbers (defined on the interval) the situation is
different from that for degree one circle maps.

Thus, we show that the implication ``if $\pi$ is a pattern such that
$r_\pi=\rho(\pi)$ then $\pi$ must have a block structure over an
over-twist pattern'' fails if the over-rotation pair of $\pi$ is not
coprime. Call a pattern $\pi$ \emph{badly ordered} if it does not have a block structure over an over-twist pattern of over-rotation number $\rho(\pi)$. This is analogous to the badly ordered patterns of degree one circle maps defined as patterns that do not have block structure over circle rotations. In this paper we first develop a verifiable computational  criterion for the property $r_\pi=\rho(\pi)$ to hold. Then
we discover irreducible patterns $\pi$ with
non-coprime over-rotation pairs such that $r_\pi=\rho(\pi)$ (observe
that since the over-rotation pair of $\pi$ is not coprime, $\pi$ is not
an over-twist). Such patterns are called \emph{very badly ordered} for the following reason. In \cite{alm98} \emph{badly ordered} cycles of degree one circle maps are defined as ones of rotation number, say, $p/q$ but without a block structure over a circle rotation by the angle $p/q$; it is shown in \cite{alm98} that such cycles force the rotation interval containing $p/q$ in in its interior. Very badly ordered interval cycles show that on the interval this is not true. We provide a simple algorithm to construct unimodal badly ordered patterns with arbitrary over-rotation pair.

Observe, that very badly ordered interval patterns present a surprising
departure from the previously observed phenomenon according to which
the results about over-rotation numbers on the interval and those about
classical rotation numbers for circle maps of degree one are analogous.
As a part of our study we elucidate a description of the strongest
unimodal pattern that corresponds to a given over-rotation interval.

Our paper is divided into sections as follows:

\begin{enumerate}

\item Section 1 contains preliminaries.

\item In Section 2 we establish some properties of the concept of the code (this concept was introduced in Section 1).
 	
\item In Section 3 we define \emph{very badly ordered cycles} and describe properties of their code.

\item In Section 4 the kneading sequences of unimodal maps are studied. In its first subsection we describe the strongest kneading sequence associated with a given over-rotation interval with rational left endpoint. In the second subsection we give an explicit description of maps discussed in the previous subsection.

\item In Section 5 we construct unimodal very badly ordered cycles with an arbitrary non-coprime over-rotation pair.

	\end{enumerate}

\section{ Preliminaries }\label{s:prelim}

We need some known facts; references given in each claim are not
unique. Let $I_0,\dots$ be closed intervals such that $f(I_j) \supset
I_{j+1}$ for $ j \geq 0 $; then we say that $I_0, \dots $ is an
\emph{$f-$chain} or simply a \emph{chain} of intervals. If a finite
chain of intervals $I_0, I_1, \dots , I_{k-1}$ is such that $f(I_{k-1})
\supset I_0 $, then we call $I_0, I_1, \dots , I_{k-1}$ an
$f$-\emph{loop} or simply a \emph{loop} of intervals.

\begin{lemma}[\cite{alm00}]\label{ALM2}
The following statements are true.
\begin{enumerate}
	\item Let $I_0, \dots, I_k $ be a finite chain of intervals.
Then there  is an interval $M_k$ such that $ f^j(M_j) \subset I_j $
for $ 0 \leq j \leq k-1 $ and $ f^k(M_k) = I_k.$
	\item Let $I_0, \dots $ be an infinite chain of intervals. Then
there is a nested sequence of intervals $M_k$ defined as in $(1)$
whose intersection is an interval $M$ such that $f^j(M) \subset I_j
$ for all $j$. 	
	\item Let $I_0, \dots, I_{k-1}$ be a loop of intervals. Then
there is a periodic point $x$ such that $f^j(x) \in I_j $ for $
0\leq j \leq k-1 $ and $ f^k(x)=x$
\end{enumerate}
\end{lemma}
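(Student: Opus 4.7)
The plan is to derive all three parts from one topological pullback lemma: if $f$ is continuous on a closed interval $J$ and $f(J) \supset [c,d]$, then some closed subinterval $J' \subset J$ satisfies $f(J') = [c,d]$. This is standard: pick $p,q \in J$ with $f(p) = c$ and $f(q) = d$, and shrink $[p,q]$ inward to the innermost preimages of $c$ and $d$, using the intermediate value theorem to rule out excursions outside $[c,d]$ in between.

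For Part $(1)$, I would proceed by backward induction. Starting from $J_k := I_k$, at each step I would use $f(I_{j-1}) \supset I_j \supset J_j$ together with the basic lemma to obtain $J_{j-1} \subset I_{j-1}$ with $f(J_{j-1}) = J_j$. The interval $M_k := J_0$ then satisfies $f^j(M_k) = J_j \subset I_j$ for $0 \le j < k$ and $f^k(M_k) = I_k$.

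For Part $(2)$, I would repeat the Part $(1)$ construction while arranging the nesting $M_{k+1} \subset M_k$ as I go. Namely, when passing from length $k$ to length $k+1$, I would perform each pullback step inside the interval already produced at level $k$, rather than inside $I_j$; this is legitimate because at level $k$ the relevant interval maps onto a set containing the level-$(k+1)$ target. The limit $M := \bigcap_k M_k$ is then a nested intersection of closed intervals, hence a closed interval, and $f^j(M) \subset I_j$ for all $j$ by continuity.

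For Part $(3)$, I would extend the loop periodically to a chain $I_0, I_1, \dots, I_{k-1}, I_0, I_1, \dots$ of length $k$ and apply Part $(1)$ to obtain $M_k \subset I_0$ with $f^j(M_k) \subset I_j$ for $0 \le j < k$ and $f^k(M_k) = I_0 \supset M_k$. Writing $M_k = [\alpha,\beta]$ and picking $a,b \in M_k$ with $f^k(a) = \alpha$ and $f^k(b) = \beta$, the function $x \mapsto f^k(x)-x$ changes sign between $a$ and $b$, so the intermediate value theorem yields a fixed point of $f^k$ inside $M_k$. The main technical delicacy lies in Part $(2)$: the basic lemma produces \emph{some} subinterval with the prescribed image, not a canonical one, so independent applications of Part $(1)$ for successive $k$ need not yield nested $M_k$'s; the fix is to perform every pullback at level $k+1$ inside the corresponding interval from level $k$.
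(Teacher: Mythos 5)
Your proof is correct, and since the paper cites this lemma from \cite{alm00} without reproving it, the relevant comparison is with the standard argument in that reference: your route (one pullback lemma, backward induction for finite chains, nested refinement for infinite chains, and an intermediate-value argument on $f^k(x)-x$ for loops) is exactly that standard argument. You also correctly flag and fix the only delicate point, namely that Part (2) requires performing the level-$(k+1)$ pullbacks inside the level-$k$ intervals so that the $M_k$ are nested.
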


Let $\mathcal{U}$ be the set of all piecewise-monotone interval maps
$g$ with one fixed point (denoted $a_g = a$). Fix $ f \in \mathcal{U}$,
then $f(x)> x$ for any $x < a$ and $f(x) < x $ for any $x>a$. We call
an interval $I$  \emph{admissible} if one of its endpoints is $a$. We
call a  chain(a loop) of admissible intervals $I_0,I_1,\dots $ an
\emph{admissible loop(chain)} respectively. For any admissible loop
$\overline{\alpha} = \{ I_0, I_1, \dots , I_{k-1} \} $ , we call the
pair of numbers $(\frac{p}{2}, k)=orp(\overline{\alpha})$, the
\emph{over-rotation pair} of $\overline{\alpha}$ where $p$ is the
number of subscripts $s, 0\leq s\leq k-1,$ with $I_s$ and $I_{s+1 \mod
k}$ located on opposite sides of $a$.

Evidently, this definition is consistent with the definition of
over-rotation pair given above. Observe, that for an admissible loop
the number $p$ is always even (as the interval has to come back to
where the loop starts). Call the number $ \rho(\overline{\alpha})=
\frac{p}{2k}$ the \emph{over-rotation number} of $\overline{\alpha}$. A
sequence $ \{y_1, y_2, \dots, y_l \}$ is \emph{non-repetitive} if it
cannot be represented as several repetitions of a shorter sequence.
Define a function $ \varphi_a $ on all points not equal to $a$ as follows:
$\varphi_a(b)=0$ if $b<a$ and $\varphi_a(d) =1$ if $a<d$. Also, set
$\varphi_a([b,a])=0$ if $b<a$ and $\varphi_a([a,d]) =1$ if $a<d$. Given
a set $A$ and a point $x$, write $A \leq x $ if for any $y \in A $ we
have $y \leq x $. If $x<y<a$ or $x>y>a$, we write $x>_a y$; if $x\le
y<a$ or $x\ge y>a$, we write $x\ge_a y$.

\begin{lemma}[\cite{BM1}]\label{BM3}
Let $f$ $\in \mathcal{U}$ and let $ \overline{\alpha} = \{I_0, I_1,
\dots , I_{k-1} \} $ be an admissible loop of non-degenerate intervals.
Then there are the following possibilities.

\begin{enumerate}

\item The number $k$ be even, for each $j$ the intervals $I_j$ and
    $I_{j+1}$ are such that either $I_j \leq a \leq I_{j+1}$ or $
    I_j \geq a \geq I_{j+1}$, and the map $f$ has a point $x$ of
    period 2.

\item If the above possibility fails, then there is a periodic
    point $x\in I_0$ such that $ x\neq a $, $f^j(x)\in I_j (0 \leq
    j \leq k-1)$, $f^k(x)=x$, and $\rho(x) =
    \rho(\overline{\alpha})$. If the sequence $\{ \varphi_a(I_0),
    \varphi_a(I_1), \dots , \varphi_a(I_{k-1}) \}$ is
    non-repetitive , then $orp(x)=orp(\overline{\alpha})$.
    Moreover, $x$ can be chosen so that the following holds: for
    every $y$ from the orbit of $x$ there exists no $z$ such that
    $y>_a z$ and $f(y) = f(z)$.

\end{enumerate}

\end{lemma}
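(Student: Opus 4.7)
The plan is to use Lemma \ref{ALM2} as the skeleton that converts the loop $\overline{\alpha}$ into a periodic orbit, then refine the construction to (i) avoid the trivial solution at the fixed point $a$, (ii) force the orbit's symbolic $\varphi_a$-sequence to agree with that of $\overline{\alpha}$ so the over-rotation number (and under non-repetitivity the over-rotation pair) is preserved, and (iii) arrange the extremal ``innermost'' property in the ``moreover'' clause.

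For case (1), assume $k$ is even and $I_j, I_{j+1}$ lie on opposite sides of $a$ for every $j$. Since all $I_j$ share the endpoint $a$, the unions $L=\bigcup_{j\text{ even}}I_j$ and $R=\bigcup_{j\text{ odd}}I_j$ are single admissible intervals on opposite sides of $a$, and $f(I_j)\supset I_{j+1}$ (indices mod $k$) yields $f(L)\supset R$ and $f(R)\supset L$, hence $f^2(L)\supset L$. Since $a$ is the only fixed point of $f$, $a$ is a fixed point of $f^2$ in $L$. To find another one, note that $f^2(L)\supset L$ implies that some point of $L$ maps to the endpoint of $L$ farthest from $a$; applying the IVT to $f^2-\mathrm{id}$ between that point and $a$ (where $f^2-\mathrm{id}$ vanishes) yields a fixed point $x\in L\setminus\{a\}$. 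Then $f(x)\in R$, and $f(x)\neq a$ (else $f^2(x)=a\neq x$), so $x$ has minimal period $2$.

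For case (2), apply Lemma \ref{ALM2}(1) backward around the loop to obtain closed subintervals $M_j\subset I_j$ with $f(M_j)\supset M_{j+1}$ (indices mod $k$) and $f^k(M_0)\supset I_0\supset M_0$, so Lemma \ref{ALM2}(3) furnishes $x\in M_0$ with $f^k(x)=x$ and $f^j(x)\in I_j$. To force $x\neq a$, exploit the failure of case (1): either $k$ is odd or some consecutive pair $I_{j^*},I_{j^*+1}$ lies on the same side of $a$. In the same-side case, since $f(a)=a$ and $f$ is piecewise monotone, there is a monotone branch of $f|_{I_{j^*}}$ whose image covers a proper admissible subinterval $I'_{j^*+1}\subsetneq I_{j^*+1}$ that is bounded away from $a$; replacing $I_{j^*+1}$ by $I'_{j^*+1}$ and propagating backward via Lemma \ref{ALM2}(1) yields $M_0\subset I_0\setminus\{a\}$. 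Once $x\neq a$, the orbit misses $a$ (the unique fixed point), so $\varphi_a(f^j(x))=\varphi_a(I_j)$ for all $j$; writing the minimal period of $x$ as $q$ with $k=mq$, the length-$k$ $\varphi_a$-sequence of $\overline{\alpha}$ is $m$ copies of the orbit's length-$q$ sequence, so sign-flip counts scale by $m$ and $\rho(x)=\rho(\overline{\alpha})$. If $\{\varphi_a(I_j)\}$ is non-repetitive we must have $m=1$, hence $x$ has exact period $k$ and $orp(x)=orp(\overline{\alpha})$.

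For the moreover clause, strengthen the backward construction by choosing, at each step, the monotone branch of $f|_{I_j}$ that is innermost (closest to $a$) among those whose image covers $M_{j+1}$. This makes every orbit point $y=f^i(x)$ the preimage of $f(y)$ closest to $a$ on its side, ruling out any $z$ with $y>_a z$ and $f(z)=f(y)$. The main obstacle is that the ``innermost'' and ``avoid $a$'' requirements pull against each other---the branch rooted at $a$ is automatically innermost yet contains $a$---and the same-side pair produced by the failure of case (1) is precisely what breaks the tie, since at the distinguished step the innermost qualifying branch still admits a preimage of $M_{j^*+1}$ that stops short of $a$. Propagating this choice consistently around the whole loop while maintaining all chain inclusions $f(M_j)\supset M_{j+1}$ is the technical crux of the proof.
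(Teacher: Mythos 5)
This lemma is quoted from \cite{BM1} as a known preliminary; the present paper offers no proof of it, so there is no internal proof of the paper to compare against. Your blind attempt therefore has to be judged on its own merits, and while the overall skeleton (build from Lemma~\ref{ALM2}, use the failure of alternative~(1) to dodge the fixed point, match $\varphi_a$-itineraries to control the over-rotation data, and refine to an innermost loop) is the right shape, there are several places where the argument does not yet close.

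The most concrete gap is in alternative~(1). Your observation that $f(L)\supset R$ and $f(R)\supset L$ (which does follow from the nesting of admissible intervals with common endpoint $a$) is correct, and so is $f^2(L)\supset L$. But the intermediate-value argument you then apply does not produce a fixed point of $f^2$ other than $a$. If $L=[b,a]$ and $y\in L$ satisfies $f^2(y)=b$, you have $f^2(y)-y\le 0$ and $f^2(a)-a=0$; the IVT then yields a zero of $f^2-\mathrm{id}$ in $[y,a]$, but $a$ itself is such a zero, and nothing rules out $a$ being the \emph{only} one (the function $f^2-\mathrm{id}$ may stay strictly negative on $[y,a)$). The ``$\setminus\{a\}$'' in your conclusion is asserted, not proved. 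Closing this requires an extra step, for instance a Sharkovsky-type forcing argument once one exhibits a non-fixed periodic point, or a two-sided horseshoe argument using both $L$ and $R$ together with the non-degeneracy of the intervals; your text does not supply it.

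In alternative~(2) the same issue reappears and is handled only by gesture. You say a same-side pair $I_{j^*},I_{j^*+1}$ lets you choose a monotone branch whose image covers a proper admissible subinterval ``bounded away from $a$,'' and that propagating backward gives $M_0\subset I_0\setminus\{a\}$. This is plausible, but for $f\in\mathcal{U}$ it is exactly the delicate point: every admissible interval has $a$ as an endpoint, and the preimage branch rooted at $a$ is always available, so you must explain why the backward propagation can consistently avoid it all the way around the loop. The same tension dominates your ``moreover'' clause, which you explicitly flag as the technical crux without resolving it: you need to show that the collection of innermost qualifying branches is simultaneously compatible (each $f(M_j)\supset M_{j+1}$ and each $M_j$ stays clear of $a$), and that the resulting periodic point realizes the innermost preimage at every step of its orbit. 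As written, the claim that ``the same-side pair $\dots$ breaks the tie'' is an assertion rather than a proof. By contrast, your handling of the symbolic part is fine: once $x\ne a$ is secured, the $\varphi_a$-itinerary of $x$ agrees with that of $\overline{\alpha}$, so $\rho(x)=\rho(\overline{\alpha})$, and non-repetitivity forces exact period $k$ and hence $orp(x)=orp(\overline{\alpha})$.
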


A point $x$ from Lemma \ref{BM3}(2) is said to be \emph{generated} by $
\overline{\alpha}$.

Let us state results of \cite{BM2}. One of them gives a criterion for a
pattern to be an over-twist pattern. To state it we need to define the
\emph{code}, that is, a special function which maps points of either
periodic orbit or of a pattern to the reals. We also need a few other
definitions.

A cycle $P$ of a map $f$, and the pattern it represents, are
\emph{divergent} if it has points $x<y$ such that $f(x)<x$ and $f(y)
>y$; a cycle (pattern) that is not divergent is
\emph{convergent}. Clearly, a convergent cycle has a unique basic
interval $U$ such that its left endpoint is mapped to the right and its
right endpoint is mapped to the left. Evidently, this interval contains
an $f$-fixed point, always denoted by $a$. There is an equivalent way
to define convergent patterns. Namely, if $f$ is a $P$-monotone map for
a cycle $P$ then $P$ is convergent if and only if $f \in \mathcal{U}$.

Let $P$ be a cycle of $f \in \mathcal{U} $ and $\varphi=\varphi_a$ be a
function defined above. Following \cite{bk98, BM2}, define the
\emph{code} for $P$ as a function $L:P \to \mathbb{R}$ given by
$L(x)=0$ for the leftmost point $x$ of $P$ and then by induction
$L(f(y))= L(y) + \rho - \varphi(y)$ where $\rho$ is the over-rotation
number of $P$. When we get back to $x$ along $P$, we $n$ times add the
number $\rho$ (here $n$ is the period of $P$), and we subtract the sum
of $\varphi$ along $P$ which by definition equals $n \rho$, so in the
end we do not change $L$ at $x$. Thus, the concept of code is
well-defined. If a cycle $P$ is convergent, the code for $P$ is
\emph{monotone} if for any $x,y \in P$ , $x>_a y$ implies $L(x)<L(y)$;
if we only require that $L(x)\le L(y)$, we say that the code of $P$ is \emph{non-decreasing}. Clearly, all these notions can be defined for patterns too.

\begin{theorem}[\cite{BM2}]\label{BM4}
A pattern is over-twist if and only if it is convergent and has
monotone code.	
\end{theorem}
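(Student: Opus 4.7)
The plan is to prove both directions by translating the combinatorial content of the over-twist property into statements about admissible loops in the $P$-linear map $f_P$, using Lemma \ref{BM3} as the main bridge. The code $L$ is designed so that around any orbit the sum of the increments $\rho - \varphi(y)$ telescopes to zero; this means the code records the deviation of partial orbit sums from the uniform rotation rate $\rho$, and the order structure of a cycle on the line directly interacts with these partial sums. This is the feature we will exploit.

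For the forward direction I would assume that $\pi$ is an over-twist represented by a convergent-or-divergent cycle $P$ of $f_P$. First I would rule out divergence: if $P$ were divergent, then two points $x<y$ with $f(x)<x$ and $f(y)>y$ produce short admissible loops on each side of the fixed point $a$, and Lemma \ref{BM3}(2) turns these into cycles of over-rotation number strictly less than $\rho(P)$, contradicting minimality. Second, assuming $P$ convergent, suppose the code is not monotone, so there exist $x>_a y$ in $P$ with $L(x)\ge L(y)$. I would then follow the orbit segments from $x$ and from $y$ until they first meet or switch sides of $a$ and splice them into an admissible loop $\bar\alpha$ made of intervals with endpoints at $a$. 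A direct count of sign changes along this spliced loop, combined with the code inequality $L(x)\ge L(y)$, forces $\rho(\bar\alpha)<\rho(P)$; applying Lemma \ref{BM3}(2) again yields a cycle of smaller over-rotation number, again contradicting over-twistness.

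For the backward direction, suppose $P$ is convergent and its code is monotone, yet $\pi$ is not over-twist. Then $f_P$ has a cycle $Q$ with $\rho(Q)<\rho(P)$. Since $f_P$ is convergent and $P$-monotone, Lemma \ref{ALM2}(3) lets me represent $Q$ by an admissible loop $\bar\alpha=\{I_0,\dots,I_{m-1}\}$ of admissible intervals whose endpoints sit in $P\cup\{a\}$; after refining, the endpoints of each $I_j$ opposite to $a$ are consecutive points of $P$. The key computation is to evaluate $\sum_{j=0}^{m-1}(\rho(P)-\varphi_a(I_j))$: on one hand this equals $m\bigl(\rho(P)-\rho(Q)\bigr)>0$, while on the other hand, by the definition of the code and the endpoint-tracking of the intervals $I_j$, this sum is a telescoping total of code differences $L(\cdot)-L(\cdot)$ along $P$. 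Monotonicity of the code forces each such difference to have the sign dictated by the $>_a$ ordering of the endpoints, and a careful bookkeeping shows the total must be $\le 0$, a contradiction.

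The main obstacle is the last bookkeeping step in the backward direction: one needs to match the signs of the code differences $L(u)-L(v)$ at the two endpoints of each $I_j$ with the $\varphi_a$-parity of that interval, and to do this uniformly when the intervals of the loop may overlap and when the loop may traverse both sides of $a$. A clean way is to reduce, via an auxiliary normalization of $\bar\alpha$, to the case where each $I_j$ lies between two consecutive points of $P$ on the same side of $a$; the monotonicity of the code then assigns a definite sign to each summand, and the telescoping cancellation produces the desired contradiction. Once this combinatorial identification is done, both implications follow at the same time, giving the stated equivalence.
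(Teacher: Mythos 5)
This theorem is cited from \cite{BM2}; the present paper does not supply its own proof, so I am evaluating your argument on its merits and against the related lemmas the paper does prove.

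Your forward direction is broadly consistent with the technique of Lemmas \ref{code:nonmono} and \ref{noncoprime:ovp}: from a failure of strict monotonicity you build an admissible loop whose over-rotation number does not exceed $\rho(P)$ and then invoke Lemma \ref{BM3}. Two small repairs are needed. If $P$ is divergent, the $P$-linear map is not in $\mathcal{U}$ (it has more than one fixed point), so ``admissible loops on each side of the fixed point $a$'' is not well-formed; the standard route is to observe that a divergent cycle yields a horseshoe, which forces cycles of every over-rotation number. And when $L(x)=L(y)$ rather than $L(x)>L(y)$, the spliced loop has over-rotation number \emph{equal} to $\rho(P)$, not strictly smaller; over-twistness is still violated because the resulting cycle has period strictly less than $q$ and hence a pattern different from $\pi$ with the same over-rotation number, but your ``$\rho(\bar\alpha)<\rho(P)$'' needs to be ``$\le$'' and the two cases handled separately.

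The backward direction has a genuine gap, and it is exactly the gap this paper is about. You reduce ``$\pi$ is not over-twist'' to ``$f_P$ has a cycle $Q$ with $\rho(Q)<\rho(P)$'', but that implication is false. Failure of forcing-minimality only gives a forced pattern $\theta\neq\pi$ with $\rho(\theta)=\rho(\pi)$ — the \emph{same} over-rotation number, not a smaller one. The telescoping/weighted-average computation you describe is essentially the argument of Lemma \ref{code:nondec}; it rules out forced cycles of \emph{strictly smaller} over-rotation number and thereby proves $r_\pi=\rho(\pi)$ (Corollary \ref{c:alt-badly}), and it already works under the weaker hypothesis of a non-decreasing code. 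But $r_\pi=\rho(\pi)$ does not imply over-twist: the very badly ordered patterns constructed in this paper have non-decreasing code, satisfy $r_\pi=\rho(\pi)$, and yet are not over-twists. Moreover, your weighted-average bookkeeping gives no contradiction when the comparison cycle has over-rotation number exactly $\rho(P)$, since then every insert can also have over-rotation number $\rho(P)$ and the average is consistent. What is missing is the genuinely nontrivial content of the converse in \cite{BM2}: that \emph{strict} monotonicity of the code excludes $P$ from forcing \emph{any} other pattern of the same over-rotation number (not merely a smaller one). Until that is supplied, what you have proved is Corollary \ref{c:alt-badly}, not Theorem \ref{BM4}.
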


We need results of \cite{BS}. Denote by $\mathcal{U}_1$ the family of
continuous maps $f:[0,1]\to [0,1]$ with a unique critical point $c$
which is a local maximum of $f$ and a unique fixed point $a$; call maps
from $\mathcal{U}_1$ \emph{unimodal}. For each $x\in [0,1]$, define its
\emph{itinerary} as the sequence $ i(x)= i_0(x) i_1(x) i_2(x) \dots$ of
symbols $L, C, R$ as follows:

 \begin{equation}
 i_j(x)=
 \begin{cases}

 C & \text{if $ f^j(x)= c$}\\
 L & \text{if $f^j(x) <  c$}\\
 R & \text{if $f^j(x) >  c$}\\

 \end{cases}
 \end{equation}

Order symbols $L, C, R$ as follows: $L < C < R $. Now order itineraries
as follows. Suppose $A = (a_0, a_1, a_2, \dots)$ and $B = (b_0, b_1,
b_2, \dots)$ are two itineraries such that $ j = $ min $\{ k\ge 0| a_k \neq b_k \} $. We write $A \succ B$ if there is an
even number of $R's $ among $a_0, a_1, \dots, a_{j-1}$ and $a_j> b_j $,
or there is an odd number of $R's$ among $a_0, a_1, \dots, a_{j-1}$ and
$a_j< b_j$ . If $x>y$, then $i(x) \succ i(y)$. Conversely, if $i(x)
\succ i(y)$, then $x> y$.

An itinerary $A$ is \emph{shift maximal} if $A \succ \sigma^j(A)
\forall j \in \mathbb{N} $ where $\sigma$ is the left shift. For a
continuous map $f\in \mathcal{U}_1$ define the \emph{kneading sequence}
$\nu(f)$ of $f$ as the itinerary of $f(c)$, that is, the sequence
$i(f(c)) $. Clearly, $\nu(f)$ is shift maximal. The over-rotation
interval $[r_f, 1/2]$ of a given unimodal map $f$ depends only on its
kneading sequence $\nu(f)$ so that we can write $r_{\nu(f)}$ instead of
$r_f$. In fact, if $f$ and $g$ are two unimodal maps with $\nu(f)\succ
\nu(g)$, then $I_f\supseteq I_g$.

Let us now describe the unimodal over-twist pattern
$\gamma_{\frac{p}{q}}$ of over-rotation number $\rho=p/q$ where $p$ and
$q$ are coprime:

\begin{enumerate}

\item the first $q-2p$ points of the orbit from the left are
    shifted to the right by $p$ points;

\item the next $p$ points are flipped (that is, the orientation is
    reversed, but the points which are adjacent remains adjacent)
    all the way to the right;

\item the last $p$ points of the orbit on the right are flipped all
    the way to the left.

\end{enumerate}

Thus $\gamma_{\frac{p}{q}}$ is described by the following permutation
$\pi_{\frac{p}{q}}$:

\begin{equation}
\pi_{\frac{p}{q}}(j)=
\begin{cases}

j+p  &   \text{if $1 \leq j \leq q-2p $}\\
2q-2p+1-j  & \text{if $q-2p+1 \leq j \leq q-p$}\\
q+1-j & \text{if $q-p+1\leq j \leq q$ }\\

\end{cases}
\end{equation} 		

\noindent The unimodal over-twist pattern $\gamma_{\frac{2}{5}}$ is
shown in the next figure.

\begin{figure}[H]
 	\caption{\textbf{\textit{The Unimodal over-twist pattern $\gamma_{\frac{2}{5}}$ }}}
 	\centering
 	\includegraphics[width=0.7\textwidth]{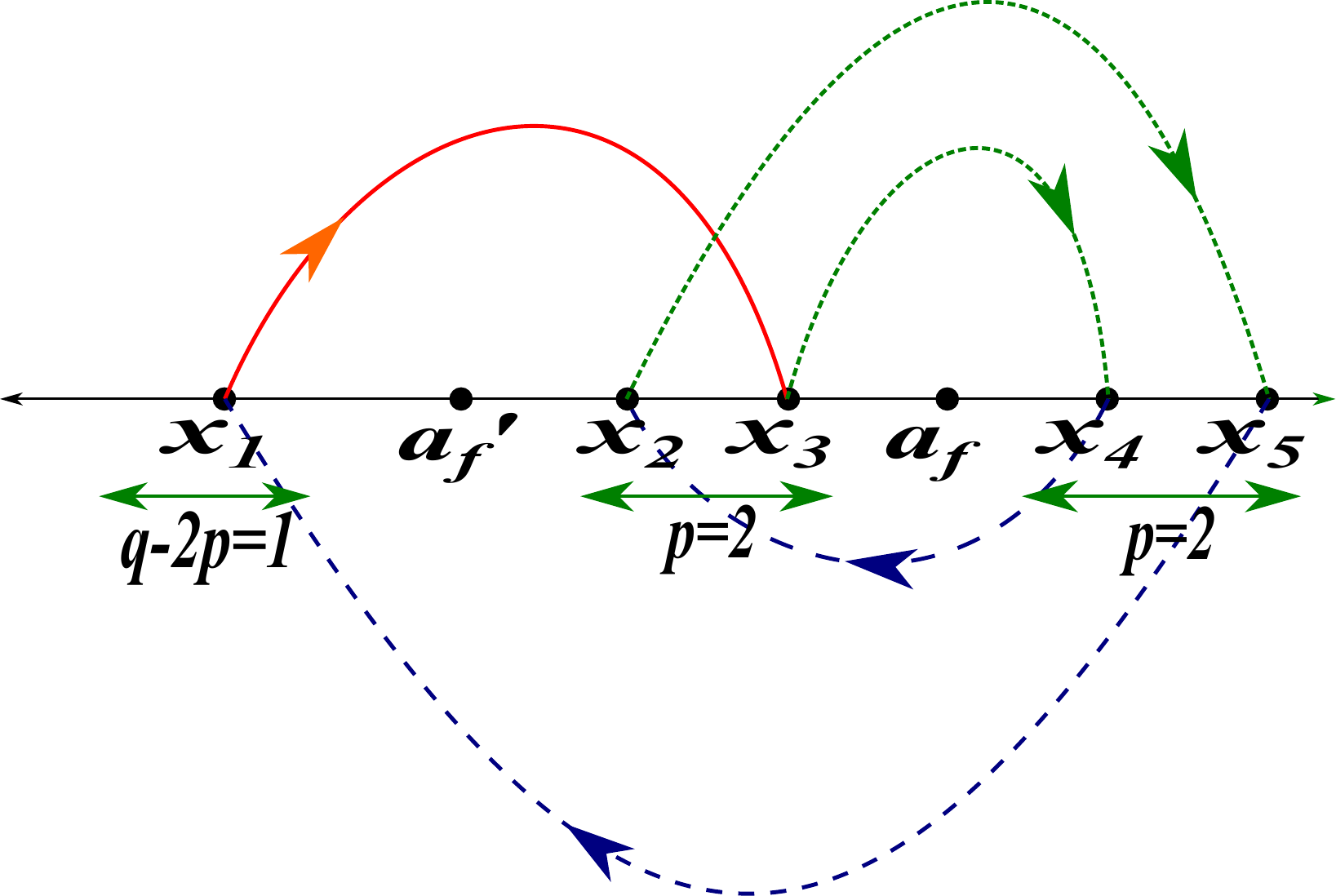}
\end{figure}

\begin{theorem}[\cite{BS}]\label{weak:knead}
The only unimodal over-twist pattern of over-rotation number $\rho$ is
the pattern $\gamma_{\rho}$. Thus, if $f:[0,1] \to [0,1]$ is a unimodal
map and the over-rotation interval of $f$ is $I_f = [r_f, \frac{1}{2}]$
then $r_f\le \rho$ iff $f$ has a periodic orbit with pattern
$\gamma_{\rho}$  	
\end{theorem}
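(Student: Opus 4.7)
The plan is to prove the theorem in three stages: (1) verify that $\gamma_{p/q}$ itself is a unimodal over-twist of over-rotation number $p/q$; (2) show that no other unimodal over-twist pattern of over-rotation number $p/q$ exists; (3) derive the stated equivalence from (1), (2) and the results already quoted from \cite{BM1}.

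For Stage~1, I would read off the explicit formula for $\pi_{p/q}$: the sequence $\pi(1),\dots,\pi(q)$ strictly increases from $p+1$ to $q$ on $[1,q-2p+1]$ and strictly decreases from $q$ to $1$ on $[q-2p+1,q]$, so the pattern is unimodal with peak at position $q-2p+1$. The unique basic interval whose left endpoint is mapped right and whose right endpoint is mapped left turns out to be $(x_{q-p},x_{q-p+1})$, so the pattern is convergent and the fixed point $a$ lies there. The $p$ points $x_{q-p+1},\dots,x_q$ above $a$ all map, by block~3 of the permutation, into $\{x_1,\dots,x_p\}$ below $a$, confirming $orp(\gamma_{p/q})=(p,q)$ and $\rho(\gamma_{p/q})=p/q$. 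Setting $\ell=qL$, the recurrence $L(f(y))=L(y)+\rho-\varphi(y)$ becomes the cyclic rotation $\ell(f(y))\equiv \ell(y)+p\pmod{q}$; since $\gcd(p,q)=1$, $\ell$ is a bijection $P\to\{0,\dots,q-1\}$. A direct induction over the three blocks then yields $L(x_i)=(i-1)/q$ for $i\le q-p$ and $L(x_i)=(2q-p-i)/q$ for $i\ge q-p+1$, which is exactly the monotone-code property; by Theorem~\ref{BM4}, $\gamma_{p/q}$ is an over-twist.

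For Stage~2, let $\pi$ be any unimodal over-twist with coprime pair $(p,q)$ realized by $P=\{x_1<\dots<x_q\}$. Theorem~\ref{BM4} gives convergence plus a monotone code; let $(x_k,x_{k+1})$ be the basic interval containing the fixed point $a$, so $P_-=\{x_1,\dots,x_k\}$ and $P_+=\{x_{k+1},\dots,x_q\}$. Since every revolution crosses $a$ exactly $p$ times from each side, $k\ge p$ and $q-k\ge p$. Monotone code, combined with $\ell=qL$, yields an order-preserving bijection from $P_-$ onto some subset of $\{0,\dots,q-1\}$ and an order-reversing one from $P_+$, while the dynamics on $\ell$ is the rotation $\ell\mapsto \ell+p\pmod{q}$. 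Tracking $\ell$ in left-to-right order produces an explicit formula $\pi(i)=\sigma_k^{-1}\bigl((\sigma_k(i)+p)\bmod q\bigr)$, where $\sigma_k(i)=i-1$ on $[1,k]$ and $\sigma_k(i)=q+k-i$ on $[k+1,q]$. A case analysis on the (at most four) subintervals on which this composite formula is piecewise linear shows that the sequence $\pi(1),\dots,\pi(q)$ is unimodal \emph{only} when $k=q-p$: for $k>q-p$ a strict local minimum appears after the global peak, and for $k<q-p$ a second peak appears. Plugging $k=q-p$ into the formula recovers exactly $\pi_{p/q}$.

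For Stage~3, if $f$ has a cycle of pattern $\gamma_\rho$ then the over-rotation number $\rho$ lies in $I_f$, so $r_f\le\rho$. Conversely, if $r_f\le\rho\le 1/2$, the cited consequence of \cite{BM1} supplies an over-twist cycle of $f$ of over-rotation number $\rho$ whenever $\rho>r_f$, and by Stage~2 that cycle must have pattern $\gamma_\rho$; the boundary case $\rho=r_f$ is handled by a standard kneading-sequence argument using that $\nu(f)$ dominates the kneading sequence of the $\gamma_\rho$-linear map, which via the monotonicity statement $\nu(f)\succ \nu(g)\Rightarrow I_f\supseteq I_g$ recorded in the excerpt forces $\gamma_\rho$ to occur as a cycle of $f$. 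The main obstacle is Stage~2, specifically the combinatorial verification that unimodality forces $k=q-p$ and then reproduces the three-block structure of $\pi_{p/q}$; Stages~1 and 3 are direct computations or invocations of results already in the excerpt.
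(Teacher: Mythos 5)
The paper does not prove Theorem~\ref{weak:knead}: it is imported wholesale from \cite{BS}, so there is no internal proof to compare against. Your proposal therefore has to stand on its own, and the central difficulty is in Stage~2.

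The step that fails is the claim that ``monotone code, combined with $\ell=qL$, yields an order-preserving bijection from $P_-$ onto some subset of $\{0,\dots,q-1\}$ and an order-reversing one from $P_+$,'' from which you read off $\pi(i)=\sigma_k^{-1}\bigl((\sigma_k(i)+p)\bmod q\bigr)$ with $\sigma_k(i)=i-1$ on $[1,k]$. That formula presupposes $\ell(P_-)=\{0,\dots,k-1\}$ and $\ell(P_+)=\{k,\dots,q-1\}$, and this is not a consequence of ``convergent + monotone code'' alone. A concrete counterexample: take $q=7$, $p=2$, and the cyclic pattern $1\to 2\to 4\to 5\to 6\to 3\to 7\to 1$, i.e. the permutation $\pi(1)=2,\pi(2)=4,\pi(3)=7,\pi(4)=5,\pi(5)=6,\pi(6)=3,\pi(7)=1$. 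Here $a\in(x_5,x_6)$, the pattern is convergent, its over-rotation pair is $(2,7)$, and computing $\ell$ from the code recursion gives $\ell=(0,2,3,4,6,8,5)$ --- strictly increasing on $P_-=\{x_1,\dots,x_5\}$ and strictly decreasing on $P_+=\{x_6,x_7\}$, so the code is monotone and by Theorem~\ref{BM4} this \emph{is} an over-twist of over-rotation number $2/7$. Yet $\ell(x_6)=8>q-1$, so $\ell$ is not of the form $\sigma_k$, and the pattern is not unimodal ($\pi$ dips at position $4$ and rises again at $5$). The moral is that over-twists of a given coprime pair are far from unique, and their $\ell$-values need not live in $\{0,\dots,q-1\}$; to pin $\ell$ down to $\sigma_{q-p}$ you must invoke unimodality \emph{before} writing the formula, not only afterwards when choosing $k$. (As a side remark, once one notes that monotone code forces every point above $a$ to map below $a$ --- for if $a<f(y)<y$ then $L(f(y))>L(y)$ by monotonicity but $L(f(y))=L(y)+\rho-1<L(y)$ --- one already gets $k=q-p$ directly, so your weaker observation ``$k\ge p$ and $q-k\ge p$'' and the closing case analysis over $k$ are not where the real work lies.)

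Stage~3 has a related soft spot. For $\rho>r_f$, \cite{BM1} produces \emph{some} over-twist cycle of over-rotation number $\rho$ in $f$, but a unimodal map certainly has cycles of non-unimodal patterns, and that over-twist cycle may well be one of the non-unimodal ones (such as the $(2,7)$ example above). Stage~2, which classifies \emph{unimodal} over-twists, does not by itself show that $f$ has a cycle of pattern $\gamma_\rho$. To get $\gamma_\rho$ one really needs the kneading-theoretic argument you gesture at for the boundary case --- namely, that $\nu_\rho$ is the weakest kneading sequence whose over-rotation interval contains $\rho$, and that $\nu(f)\succeq\nu_\rho$ forces a cycle with itinerary $\nu_\rho$ --- but that is essentially the content of Theorem~\ref{weak:knead} itself and cannot be invoked as a ``standard argument'' without proof. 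In short: Stage~1 is fine, Stage~2 has a genuine gap at the step where $\ell$ is claimed to equal $\sigma_k$, and Stage~3 leans on Stage~2 in a way that does not cover non-unimodal over-twist cycles of $f$.
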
	
 	
We need more concepts from {\it one-dimensional combinatorial dynamics}
\cite{alm00}. A map $f$ has a {\it horseshoe} if there are two closed
intervals $I, J$ with disjoint interiors whose images cover their
union. In particular, $f$ has a horseshoe if there exist points $a,b,c$
such that either $f(c)\le a=f(a)<b<c\le f(b)$ (set $I=[a, b], J=[b,
c]$) or $f(c)\ge a=f(a)>b>c\ge f(b)$ (set $I=[b, a], J=[c, b]$). It is
easy to see \cite{BM1} that if a map has a horseshoe then it has
periodic points of all possible over-rotation numbers. 

A map (not necessarily one-to-one) of the set $T_n=\{1,2,\dots,n\}$
into itself is called a {\it non-cyclic pattern}. If an interval map
$f$ has a cycle $P$ from a pattern $\pi$ associated with permutation
$\Pi$, we say that $P$ is a {\it representative} of $\pi$ (or $\Pi$)
in $f$ and $f$ {\it exhibits} $\pi$ (or $\Pi$) on $P$; if $f$ is
{\it monotone (linear)} on
each complementary to $P$ interval, we say that $f$ is {\it
$P$-monotone ($P$-linear)} \cite{MN}. In what follows the same
terminology will apply to permutations, patterns and cycles, so for
brevity we will be introducing new concepts for, say, permutations.


\section{Some results regarding the code}

Let us prove a general result relating the code and the property of a
pattern $\pi$ to force another pattern of smaller over-rotation number.
In what follows we use notation, say, $[b, d]$ for a closed interval
with endpoints $b$ and $d$ \emph{without assuming that $b<d$}.

\begin{lemma}\label{code:nonmono}
Let $f: [0,1] \to [0,1] $ be a continuous map with a unique fixed point
$a$. Let $P$ be a periodic orbit of $f$ with over-rotation pair
$(p,q)$. 
If there exist $x, y\in P$ satisfying $x>_a y $ but $L(x)> L(y)$, then
$I_f \supset [\frac{l}{k}, \frac{1}{2}]$ where $\frac{l}{k}<
\frac{p}{q}$ and $k<q$ .
\end{lemma}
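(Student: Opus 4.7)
The plan is to turn the failure of code monotonicity at $(x,y)$ into an admissible $f$-loop strictly shorter than $q$ and of strictly smaller over-rotation number, and then to extract a cycle of $f$ via Lemma~\ref{BM3}. Writing the orbit in iteration order as $x_0, x_1, \ldots, x_{q-1}$ with $x = x_i$ and $y = x_j$, set $m = (i-j) \bmod q$, so that $0 < m < q$ and $f^m(y) = x$.

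For each $z \in P$ I would introduce the admissible interval $J_z = [a, z]$. Since $f(a)=a$ and $f$ is continuous, $f(J_z) \supseteq J_{f(z)}$ for every $z$. The hypothesis $x >_a y$ says that $y$ and $x$ lie on the same side of $a$ with $y$ closer to $a$, so $J_y \subseteq J_x$. Therefore the sequence
\[
\bar\alpha := (J_y,\ J_{f(y)},\ J_{f^2(y)},\ \ldots,\ J_{f^{m-1}(y)})
\]
is an admissible loop of length $m < q$: the closing step $f(J_{f^{m-1}(y)}) \supseteq J_{f^m(y)} = J_x \supseteq J_y$ legally returns to $J_y$ through the larger interval $J_x$.

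Next I would bound $\rho(\bar\alpha)$. Let $D$ be the number of down-crossings along the orbit-arc $y \to f(y) \to \cdots \to x$. Because $x$ and $y$ lie on the same side of $a$, the last transition of this arc is a crossing exactly when the closing transition of $\bar\alpha$ is, so the loop and the arc share the same total number of crossings; combining this with the fact that down- and up-crossings balance on any arc whose endpoints lie on the same side of $a$ gives $2D$ crossings in the loop, whence $\rho(\bar\alpha) = D/m$. The iterated code recursion gives
\[
L(x) - L(y) = m\rho - \sum_{k=0}^{m-1}\varphi(x_{j+k}),
\]
and the hypothesis $L(x) > L(y)$ says $m\rho$ strictly exceeds $\sum\varphi$. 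Since every down-crossing sits at a point above $a$, $\sum \varphi \ge D$, yielding $D < m\rho$, i.e.\ $\rho(\bar\alpha) = D/m < p/q$. In particular $\rho(\bar\alpha) < 1/2$, which rules out case~(1) of Lemma~\ref{BM3}.

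Finally, case~(2) of Lemma~\ref{BM3} applied to $\bar\alpha$ produces a periodic point of $f$ distinct from $a$, of some period $k$ dividing $m$, with over-rotation number equal to $\rho(\bar\alpha)=D/m$. Writing its over-rotation pair as $(l,k)$ yields $l/k < p/q$ and $k \le m < q$. Since this cycle of $f$ realises the over-rotation number $l/k$, we have $l/k \in I_f = [r_f, 1/2]$, and hence $I_f \supset [l/k, 1/2]$ as asserted. The delicate step of the argument is the inequality $\sum\varphi \ge D$: this is what makes the code inequality quantitatively strong enough to convert the non-monotonicity into a strictly smaller loop rotation number; the rest (constructing $\bar\alpha$ and invoking Lemma~\ref{BM3}) is routine.
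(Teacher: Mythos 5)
Your proof is correct and follows the same route as the paper: both arguments turn the code inequality into the admissible loop $[y,a]\to[f(y),a]\to\cdots\to[f^{m-1}(y),a]\to[y,a]$ of length $m<q$ (using $[x,a]\supset[y,a]$ to close it) and then invoke Lemma~\ref{BM3}(2) to extract a cycle of over-rotation number $<p/q$ and period $<q$. Two small points where you are slightly more careful than the paper's write-up: you only need and prove the inequality $\sum\varphi\ge D$ (whereas the paper writes the telescoped code difference directly as $k\frac{p}{q}-l$, implicitly identifying $\sum\varphi$ with the numerator $l$ of the loop's over-rotation pair); and you explicitly rule out case~(1) of Lemma~\ref{BM3} by observing that $\rho(\bar\alpha)<p/q\le 1/2$, a step the paper leaves tacit. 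Neither changes the substance of the argument.
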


\begin{proof}
Choose $k \in \mathbb{N}, k < q$ with $ x = f^k(y)$. Consider the
sequence of intervals $ J_i , i \in \{0,1,2, \dots k \}$ such that $J_0
= [y,a], J_1 = [f(y),a], J_2=[f^2(y), a], \dots J_{k-1}=[f^{k-1}(y),
a], J_k=[f^k(y), a]=[x,a]\supset [y,a]  $ since $x
>_a y $. Clearly, $J_0 \supseteq J_1 \supseteq J_2 , \dots , \supseteq
J_{k-1} \supseteq J_k \supseteq J_0 $. Thus, $J_0 \to J_1 \to J_2 \to
J_3 \to \dots \to J_{k-1} \to J_0 $ is an admissible loop of intervals.
By Lemma \ref{BM3} there exists a periodic point $z \in J_0$ such that
$f^i(z) \in J_i, i\in \{0,1,2, \dots k-1\}$ and $f^k(z) = z$. The
period of $z$ is either $k$ itself, or a divisor of $k$; the
over-rotation number of $z$ is then $\frac{l}{k}$ for some integer $l$.
The code of the point $x$ is $L(x) = L(f^k(y))= L(y) + k\frac{p}{q} -
l$. Now, from $L(x) > L(y)$ we have $L(y) + k\frac{p}{q} - l > L(y)$
which implies that $\frac{p}{q}>\frac{l}{k}$, and we are done.
\end{proof}	

By Theorem \ref{BM4} a periodic orbit $P$ from Lemma \ref{code:nonmono}
is not an over-twist. However Lemma \ref{code:nonmono} shows more; it
shows that $P$ forces a periodic orbit of smaller rotation number and
smaller period.

Let us now study the case when the code is only non-decreasing (and not
strictly increasing as in Lemma \ref{code:nonmono}).


\begin{lemma}\label{noncoprime:ovp}
Let $f: [0,1] \to [0,1] $ be a continuous map with a unique fixed point
$a$. Let $P$ be a periodic orbit of $f$ of over-rotation pair $(p, q)$.
Suppose that 
there exist $u, v\in P $, $u\neq v$ such that $u >_a v $ and $L(u) =
L(v)$. Then $p$ and $q$ are not coprime.
\end{lemma}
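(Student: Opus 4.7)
The plan is to exploit the additive formula for the code under iteration and derive a direct divisibility obstruction, in the same spirit as Lemma \ref{code:nonmono} but with an equality rather than an inequality.

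First I would pick the integer $k$ with $1 \le k \le q-1$ such that $u = f^k(v)$; such a $k$ exists because $u$ and $v$ are distinct points of the same period-$q$ orbit $P$. Iterating the defining recursion $L(f(y)) = L(y) + \rho - \varphi(y)$ with $\rho = p/q$, I get
\begin{equation*}
L(u) \;=\; L(f^k(v)) \;=\; L(v) + k\,\frac{p}{q} - \sum_{i=0}^{k-1}\varphi(f^i(v)).
\end{equation*}
Setting $m = \sum_{i=0}^{k-1}\varphi(f^i(v))$, this is a nonnegative integer because each $\varphi$ value is $0$ or $1$.

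Next I would use the hypothesis $L(u) = L(v)$ to conclude $kp/q = m$, i.e.\ $kp = mq$. Hence $q$ divides $kp$. If $p$ and $q$ were coprime, this would force $q \mid k$, contradicting $1 \le k \le q-1$. Therefore $\gcd(p,q) > 1$, which is exactly the claim.

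I do not expect any real obstacle here: the whole argument rests on the elementary observation that the code accumulates by integer multiples of $\rho$ modulo integers, together with the fact that $u \neq v$ guarantees $0 < k < q$. The hypothesis $u >_a v$ is not actually needed for the divisibility argument itself; it merely states the geometric situation under which the lemma will be invoked later (paralleling Lemma \ref{code:nonmono}, where the strict inequality $L(x) > L(y)$ forces a strictly smaller over-rotation number, whereas here the equality only forces non-coprimality).
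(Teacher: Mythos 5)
Your proof is correct, and it is a genuinely leaner route than the one the paper takes. The paper also reaches the identity $L(u)=L(v)+s\rho-(\text{integer})$, but it gets there by first building the admissible loop $[v,a]\to[f(v),a]\to\cdots\to[v,a]$ from the hypothesis $u>_a v$ and invoking Lemma \ref{BM3} to produce a periodic point whose over-rotation number supplies the integer in question. That machinery is needed in the sibling Lemma \ref{code:nonmono}, where one must actually exhibit a cycle of strictly smaller over-rotation number, but for the present lemma the only thing the paper extracts from it is the integrality of the subtracted term. You obtain that integrality directly by unrolling the recursion $L(f(y))=L(y)+\rho-\varphi(y)$ and observing that $\sum_{i=0}^{k-1}\varphi(f^i(v))\in\mathbb{Z}_{\ge 0}$, with no appeal to Lemma \ref{BM3}. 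The divisibility endgame ($q\mid kp$ with $0<k<q$ contradicting coprimality) is the same in both. Your concluding remark that the hypothesis $u>_a v$ is not used in the argument is also accurate: in the paper it is used only to form the admissible loop, a detour your proof eliminates, so your version in fact establishes the slightly stronger statement that any two distinct points of $P$ with equal code force $\gcd(p,q)>1$.
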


\begin{proof}
Clearly, $u=f^s(v)$ for some $m \in \mathbb{N}, s < q$. Since $ u
>_a v $, then $[v,a] \to [f(v),a] \to [f^2(v),a] \to [f^3(v), a] \to
\dots \to [f^{s-1}(v), a] \to [v,a ]$ is an admissible loop of
intervals. Thus, by Lemma \ref{BM3} there exists a periodic point $x
\in [v,a ]$ such that $f^i(x) \in [f^i(v),a], i \in \{0,1,2, \dots s-1
\} $ and $f^s(x) = x$. The period of $x$ is either $s$ itself, or a
divisor of $s$; the over-rotation number of $x$ is then $\frac{r}{s}$
for some integer $r$. Then, the code of the point $u$ is $L(u) =
L(f^s(v))= L(v) + s\frac{p}{q} - r $. Since, $L(u) = L(v)$, we have
$L(v) + s\frac{p}{q} - r = L(v)$. This means $\frac{p}{q}=
\frac{r}{s}$. But, as we know, $s< q$. Thus, $p$ and $q$ are not
coprime.
\end{proof}

We are ready to estimate on how much the number $r_\pi$ differs from
the over-rotation number $\rho(\pi)$ of a pattern $\pi$ in the case
when the over-rotation pair of $\pi$ is coprime but $\pi$ is not
over-twist. Given a rational number $\frac{p}{q}$ let
$LF(\frac{p}{q})=\frac{l}{k}$ be the closest from the left rational
number to $\frac{p}{q}$ with denominator $k<q$.

\begin{cor}\label{c:farey} Suppose that $\pi$ is a convergent pattern with a
coprime over-rotation pair $(p, q)$; moreover, assume that $\pi$ is not
over-twist. Then $r_\pi\le LF(\frac{p}{q})$.
\end{cor}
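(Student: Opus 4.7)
The plan is to combine Theorem \ref{BM4} with the two preceding lemmas in a short case analysis, reading the corollary as a quantitative refinement of the ``only if'' direction of Theorem \ref{BM4} for coprime over-rotation pairs.

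First I would pass to a representative cycle $P$ of $\pi$ under the $P$-linear map $f_P$, so that $r_\pi=r_{f_P}$. Since $\pi$ is convergent, $f_P\in\mathcal{U}$, and Theorem \ref{BM4} applies: because $\pi$ is not over-twist, the code $L$ of $P$ fails to be monotone. Unpacking the definition of monotone code, this means there exist $x,y\in P$ with $x>_a y$ but $L(x)\ge L(y)$ (with $x\ne y$, which is automatic from $x>_a y$).

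Next I would split into two cases according to whether this inequality on $L$ is strict. If $L(x)=L(y)$, Lemma \ref{noncoprime:ovp} forces $(p,q)$ to be non-coprime, contradicting our hypothesis; so this case is vacuous. Hence $L(x)>L(y)$, and Lemma \ref{code:nonmono} gives some $\frac{l}{k}<\frac{p}{q}$ with $k<q$ such that $I_{f_P}\supset[\frac{l}{k},\frac{1}{2}]$, which means $r_\pi\le\frac{l}{k}$. By definition of $LF(p/q)$ as the largest rational strictly less than $p/q$ with denominator smaller than $q$, we have $\frac{l}{k}\le LF\bigl(\frac{p}{q}\bigr)$, and the conclusion $r_\pi\le LF\bigl(\frac{p}{q}\bigr)$ follows.

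There is really no hard step here; the work has already been done in Lemmas \ref{code:nonmono} and \ref{noncoprime:ovp} and Theorem \ref{BM4}. The only thing to be slightly careful about is that the definition of $LF(p/q)$ allows denominators $k<q$ without requiring $\gcd(l,k)=1$; but since Lemma \ref{code:nonmono} produces $\frac{l}{k}$ exactly of this form (possibly after reducing, which only makes the denominator smaller and keeps the inequality $\frac{l}{k}<\frac{p}{q}$), the comparison $\frac{l}{k}\le LF(\frac{p}{q})$ is immediate from the definition of $LF$.
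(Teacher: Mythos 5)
Your proposal is correct and follows essentially the same route as the paper: pass to a representative cycle $P$ under the $P$-linear map, invoke Theorem \ref{BM4} to see that the code of $P$ is not monotone, use Lemma \ref{noncoprime:ovp} to rule out equal code values on the same side of $a$ (so the violation must be a strict inequality), and then apply Lemma \ref{code:nonmono} to get $r_\pi\le \frac{l}{k}\le LF(\frac{p}{q})$. Your added remark about reducing $\frac{l}{k}$ to lowest terms is a harmless clarification; the paper leaves this implicit.
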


\begin{proof}
Let $P$ be a cycle of pattern $\pi$, and let $a$ be a unique fixed
point of the $P$-linear map $f$. Since $\pi$ is not over-twist, the
code of $P$ is not monotone. By Lemma \ref{noncoprime:ovp} codes of
points of $P$ located on the same side of $a$ cannot coincide. By
Theorem \ref{BM4} the fact that $\pi$ is not over-twist implies that
the code on $P$ is not increasing. Hence the code strictly decreases on
two points of $P$ located on the same side of $a$, and then Lemma
\ref{code:nonmono} implies the desired.
\end{proof}

Observe that Corollary \ref{c:farey} extends the results of
\cite{alm98} (obtained for degree one circle maps and classical
rotation numbers of their cycle) to interval maps and over-rotation
numbers of their cycles with \emph{coprime over-rotation pairs}. As we
will see later, similar extension of the results of \cite{alm98} to
cycles whose over-rotation pairs are not coprime fails.

Now, we prove the main result of this section of the paper. 
	
\begin{theorem}\label{t:endpt-ot}
Let $P$ be a cycle of a convergent pattern $\pi$ such that the $P$-linear
map $f$ has the over-rotation interval $I_f = [\rho(P), \frac{1}{2}]$
(in other words, $r_P=\rho(P)$).
Moreover, suppose that the over-rotation pair of $P$ is coprime. Then
the pattern $\pi$ is over-twist.
\end{theorem}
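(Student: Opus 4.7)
The plan is to argue by contraposition via Theorem \ref{BM4}: it suffices to show that, under the hypotheses of the theorem, the code of $P$ is monotone, since we already know $\pi$ is convergent.

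Suppose the code of $P$ is not monotone. Monotonicity requires that whenever $x>_a y$ in $P$ we have $L(x)<L(y)$, so a failure of monotonicity gives two points $x,y\in P$ with $x>_a y$ and either (i) $L(x)>L(y)$, or (ii) $x\ne y$ and $L(x)=L(y)$. I would split into these two cases and derive a contradiction with the hypotheses in each.

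In case (i), Lemma \ref{code:nonmono} applies directly: it produces a rational $l/k$ with $l/k<p/q=\rho(P)$ and $k<q$ such that $I_f\supset [l/k,1/2]$. This contradicts the assumption that the left endpoint of $I_f$ is $\rho(P)$. In case (ii), Lemma \ref{noncoprime:ovp} applies: it forces $p$ and $q$ to share a common factor, contradicting the coprimality hypothesis on the over-rotation pair of $P$. Either way we reach a contradiction, so the code of $P$ is monotone. Combined with convergence of $\pi$, Theorem \ref{BM4} yields that $\pi$ is an over-twist pattern.

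There is no real obstacle; this is essentially a repackaging of the logic used in Corollary \ref{c:farey}, with the coprimality assumption now playing the role that previously yielded the Farey-type bound. The only point to emphasize carefully in the write-up is that both directions of failure of monotonicity (strict reversal and equality at distinct points on the same side of $a$) must be ruled out separately, and that it is exactly the pair of hypotheses ``$r_P=\rho(P)$'' and ``$(p,q)$ coprime'' that precisely kills the two cases.
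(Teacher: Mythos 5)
Your proposal is correct and matches the paper's proof: both rule out a strict decrease of the code via Lemma~\ref{code:nonmono} (using $r_P=\rho(P)$) and an equality of codes at distinct points on the same side of $a$ via Lemma~\ref{noncoprime:ovp} (using coprimality), then invoke Theorem~\ref{BM4}. The only difference is cosmetic: the paper presents the two exclusions directly rather than phrasing them as cases of a contradiction argument.
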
	

\begin{proof}
Since $P$ has coprime
over-rotation pair, then by Lemma \ref{noncoprime:ovp} 
there exist no  $u, v \in P$, $u\neq v $ with $ u >_a v$ but $L(u) =
L(v)$. If there exist $x, y \in P$ with $x >_a y$ and $L(x)> L(y)$,
then by Lemma \ref{code:nonmono} we have $I_f \supset [\frac{m}{n},
\frac{1}{2}]$ with $\frac{m}{n} < \rho(P)$. Thus, the code for $P$ is
strictly monotone, and by Theorem \ref{BM4} $\pi$ is over-twist.
\end{proof}	

In the next section we show that without the assumption that the
over-rotation pair of $P$ is coprime Theorem \ref{t:endpt-ot} is not
true.

\section{Very badly ordered periodic orbits of interval maps}

By Theorem \ref{t:endpt-ot} for coprime patterns $\pi$ the fact that $r_\pi=\rho(\pi)$ implies that $\pi$ is an over-twist pattern. Hence the next corollary follows.

\begin{cor}\label{c:non-cop}
If a pattern $\pi$ is not an over-twist and $r_\pi=\rho(\pi)$ then the
over-rotation pair of $\pi$ is non-coprime.
\end{cor}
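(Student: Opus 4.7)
The plan is to deduce the corollary as the immediate contrapositive of Theorem \ref{t:endpt-ot}. Assume for contradiction that $\pi$ is not over-twist, satisfies $r_\pi=\rho(\pi)$, and has coprime over-rotation pair. Fix a cycle $P$ of pattern $\pi$ and let $f=f_P$ be its $P$-linear map; by the definition of $r_\pi$ we then have $I_f=[\rho(P),1/2]$, which is precisely the hypothesis of Theorem \ref{t:endpt-ot}. Together with coprimality of the over-rotation pair, that theorem yields that $\pi$ is over-twist, contradicting our assumption.

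The one technical caveat is that Theorem \ref{t:endpt-ot} is stated only for convergent patterns. To close this gap I would argue that the hypothesis $r_\pi=\rho(\pi)$ itself rules out divergence: if $\pi$ were divergent, then $f_P$ would possess points $x<y\in P$ with $f_P(x)<x$ and $f_P(y)>y$, forcing at least two fixed points of $f_P$ in distinct basic intervals. Around each such fixed point one can construct admissible loops of short intervals, and an application of Lemma \ref{BM3} produces periodic orbits whose over-rotation numbers can be made strictly smaller than $\rho(P)$. This gives $r_\pi<\rho(\pi)$, contradicting the hypothesis. Hence $\pi$ is automatically convergent, and Theorem \ref{t:endpt-ot} applies.

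The main obstacle, to the extent that one exists, is therefore not in the coprime/monotone-code heart of the argument — which is entirely encapsulated in Theorem \ref{t:endpt-ot} — but in cleanly ruling out the divergent subcase above. The rest of the deduction is mechanical contraposition.
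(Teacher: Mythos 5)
Your core argument is exactly the paper's: Corollary~\ref{c:non-cop} is stated there as the immediate contrapositive of Theorem~\ref{t:endpt-ot}, with the preceding sentence serving as the entire proof. So the main thrust is correct and matches.

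You rightly flag that Theorem~\ref{t:endpt-ot} is formulated only for \emph{convergent} patterns, a hypothesis the corollary's statement omits and which the paper's one-line proof silently glosses over; noticing this is a genuine improvement in care. However, your proposed way of closing the gap is itself flawed. Lemma~\ref{BM3} is stated for $f\in\mathcal{U}$, i.e.\ for piecewise-monotone maps with a \emph{unique} fixed point, and the whole apparatus of ``admissible'' intervals and loops is defined relative to that single fixed point $a$. For a divergent pattern the $P$-linear map $f_P$ has several fixed points (there are at least three, one in $(\min P,x)$, one in $(x,y)$, and one in $(y,\max P)$), so ``admissible loop'' is not even defined and Lemma~\ref{BM3} cannot be invoked. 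A correct way to rule out the divergent case is to observe that a divergent cycle produces a horseshoe for $f_P$ (take a fixed point $a\in(x,y)$ and points of $P$ that cross $a$ in both directions), and, as recalled in Section~\ref{s:prelim}, a horseshoe forces cycles of all over-rotation numbers, giving $r_\pi=0<\rho(\pi)$ and contradicting $r_\pi=\rho(\pi)$. Alternatively one may simply read the corollary, like the theorem, as implicitly restricted to convergent patterns, which is the convention the paper appears to adopt since the entire code machinery of Section~2 presupposes $f\in\mathcal{U}$.
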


A natural conjecture would be then that for non-coprime patterns $\theta$ the fact that $r_\theta=\rho(\theta)$ implies that $\theta$ has a block structure of an over-twist pattern (of the appropriate over-rotation number). This would be analogous to the similar results for degree one circle maps (see \cite{alm98}). However we will show later that on the interval there exist patterns $\theta$ with $r_\theta=\rho(\theta)$ which do not have a block structure over over-twist patterns (by Theorem \ref{t:endpt-ot} such patterns $\theta$ \emph{must} have non-coprime over-rotation patterns). Observe, that,
as was remarked before, patterns $\pi$ that have a block structure over
over-twist patterns, have the desired property $r_\pi=\rho(\pi)$ (by
Theorem \cite{BM1}, over-twist patterns must be coprime, hence the
non-coprime patterns in question are not over-twists).

\begin{definition}\label{d:badly} A cyclic pattern is said to be \emph{very badly ordered} if its over-rotation number equals the
left endpoint of the forced over-rotation interval while the pattern has no block structure over an over-twist of the same over-rotation number.
\end{definition}

Our terminology is analogous to that from \cite{alm98}.


\begin{lemma}\label{code:nondec}
Let $P$ be a periodic orbit of over-rotation pair $(nr, ns)$ where $n,
r, s \in \mathbb{N} $,  $g.c.d(r,s) = 1$ and $ n > 1 $. Suppose the
code for $P$ is non-decreasing. Then $P$ does not force a periodic
orbit $Q$ of over-rotation number less than $\frac{r}{s}$.
\end{lemma}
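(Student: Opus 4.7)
The plan is to prove the $P$-linear map $f$ admits no admissible loop of over-rotation number strictly less than $\rho(P)=r/s$. Since every periodic orbit $q_0\to q_1\to\dots\to q_{t-1}\to q_0$ of $f$ generates an admissible loop $\{[q_i,a]\}_{i=0}^{t-1}$ of the same over-rotation number, this will preclude $P$ from forcing any $Q$ with $\rho(Q)<r/s$. So I argue by contradiction: assume $\bar\alpha=\{I_0,\dots,I_{k-1}\}$ is an admissible loop with $I_j=[x_j,a]$ on side $s_j$ and $\rho(\bar\alpha)<r/s$.

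The first step is to normalize $\bar\alpha$ to have its non-$a$ endpoints in $P$. For each $j$ let $\bar x_j$ be the closest point of $P$ to $a$ on side $s_j$ with $\bar x_j\geq_a x_j$, and put $I_j^*=[\bar x_j,a]\supseteq I_j$. Because $f$ is $P$-linear, its extrema over any subinterval are attained at points of $P\cup\{a\}$; in particular, the extremum of $f$ on $I_j^*$ toward side $s_{j+1}$ is realized at some $y_j\in P\cap I_j^*$. From $f(I_j)\supseteq I_{j+1}$ one reads off $f(y_j)\geq_a x_{j+1}$; since $f(y_j)\in P$, the defining minimality of $\bar x_{j+1}$ upgrades this to $f(y_j)\geq_a \bar x_{j+1}$. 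Together these show that $\{I_j^*\}$ is an admissible loop (via connectedness of $f(I_j^*)$ and $f(a)=a$) and furnish the $P$-points $y_j$ to use below. The modified loop has the same side sequence as $\bar\alpha$, hence the same over-rotation number.

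Next I would exploit the non-decreasing code. Since $y_{j+1}\in P\cap I_{j+1}^*$, we have $y_{j+1}\leq_a \bar x_{j+1}\leq_a f(y_j)$ with $y_{j+1}$ and $f(y_j)$ on the common side $s_{j+1}$; the non-decreasing hypothesis gives $L(f(y_j))\leq L(y_{j+1})$. Substituting the code recursion $L(f(y_j))=L(y_j)+r/s-\varphi(y_j)$, where $\varphi(y_j)=1$ exactly when the step $y_j\to f(y_j)$ crosses $a$ from right to left, and telescoping around $y_0,y_1,\dots,y_{k-1},y_0$ yields $0\ge k(r/s)-\sum_{j}\varphi(y_j)$. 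But $\sum_j\varphi(y_j)$ is precisely the number of $R\to L$ sign changes in the cyclic side sequence of $\bar\alpha$, which equals half the total number of sign changes $p_{\mathrm{loop}}$; therefore $\rho(\bar\alpha)=p_{\mathrm{loop}}/(2k)\ge r/s$, contradicting the assumption.

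The main technical obstacle is the normalization step: one must carefully verify that widening each $I_j$ to the nearest outer $P$-point actually yields an admissible loop, particularly when $x_j$ lies in the $P$-basic interval containing $a$ (so that $I_j$ itself contains no points of $P$ apart from possibly $a$). The verification rests entirely on $f$ being $P$-linear, so that extrema of $f$ on subintervals are attained at $P\cup\{a\}$ and $f(P)\subseteq P$.
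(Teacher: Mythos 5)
Your proof is correct but takes a genuinely different route from the paper's. The paper starts from a periodic orbit $Q$ of over-rotation number $m/n<r/s$, carefully minimizes $Q$ so that, for each $t\in Q$, the outer endpoint $\xi(t)$ of the $(P\cup\{a\})$-basic interval containing $t$ satisfies $f(\xi(t))>_a f(t)$; this yields an admissible loop $\beta$ with $P$-vertices and over-rotation number $m/n$. It then ``fills in'' $\beta$ along $P$-trajectories to recover a repetition $\gamma$ of the fundamental loop of $P$, and derives the contradiction by writing $\rho(\gamma)=r/s$ as a weighted average of $\rho(\beta)<r/s$ and the over-rotation numbers of the inserted blocks, each $\le r/s$ by the non-decreasing code. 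You dispense with the minimality-of-$Q$ and block-insertion machinery: you widen an arbitrary admissible loop of over-rotation number $<r/s$ to one with endpoints in $P$, extract the $P$-points $y_j$ realizing the extrema, and then cyclically telescope the code recursion together with the inequality $L(f(y_j))\le L(y_{j+1})$ around the short loop, getting $k\,r/s\le\sum_j\varphi(y_j)=k\rho(\bar\alpha)$ at once. Both arguments pivot on exactly the same code comparison $L(f(y_j))\le L(y_{j+1})$, but the paper routes it through the full $P$-loop while you sum it directly around the short loop, which is cleaner. One remark worth keeping in mind: you take $\varphi(y_j)=1$ precisely when $y_j>a>f(y_j)$, which disagrees with the paper's literal definition of $\varphi_a$ (the indicator of $\{y>a\}$) but agrees with the function $\phi$ the paper itself uses inside this proof, and this ``crossing'' version is the one that actually makes the code well-defined for arbitrary convergent cycles, so your reading is the right one.
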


\begin{proof}
Let $f$ be a $P$-linear map. By way of contradiction, suppose that
there exists a periodic point $x$ of $f$ with orbit $Q$ of
over-rotation pair $(m, n)$ such that $\frac{m}{n} < \frac{r}{s}$.
Consider \emph{all} periodic points $z\ne a$ such that for every $0\le
i\le n-1$ we have $f^i(x)\ge_a f^i(z)$. Since
$\frac{m}{n}<\frac{1}{2}$, points of such periodic orbits cannot come
too close to $a$. Clearly, this is a closed family of periodic orbits.
Choose the closest to $a$ among all of them point $z$. For each
$f^i(z)$ define $y_i$ as the closest to $a$ point on the same side of
$a$ as $f^i(z)$ such that $f(y_i)=f(f^i(z))$ coincides with $f^i(z)$.

We claim that $y_i=f^i(z)$ for every $i$. Indeed, otherwise we can
construct an admissible loop $[y_0, a]\to \dots [y_{n-1}, a]\to [y_0,
a]$ which, by Lemma \ref{BM3}, will give rise to a periodic point
$z'\in [y_0, a]\subset [z, a]$ such that $f^i(z)\ge_a f^i(z')$ for each
$i$; by the choice of $z$ we get $z'=z$ which immediately implies the
claim. Thus, for every $i$ the only point that belongs to $[f^i(z), a]$
and has the same image as $f^i(z)$ is the point $f^i(z)$ itself. We can
assume that the originally chosen point $x$ equals $z$.

If $t\in Q$ then $t$ belongs to a unique $\{P\cup \{a \}\}$-basic
interval, say, $[u, v]$; assume that $u>_a t>_a v$. We claim that
$f(v)>_a f(t)$ is impossible. Indeed, otherwise there exists a point
$y\in (t, a)$ with $f(y)=f(t)$, a contradiction with the choice of $Q$.
This implies that $f(u)>_a f(t)$. Indeed, suppose otherwise. Then both
$f(u)\not >_a f(t)$ and $f(v)\not >_a f(t)$ which means that
$f(t)\notin [f(u), f(v)]$, a contradiction with the fact that $f$ is
$P$-linear. Thus, $f(u)>_a f(t)$. Set $u=\xi(t)$ and do this for every
$t\in Q$. Then $[\xi(x),a]\to [\xi(f(x)), a]\to [\xi(f^2(x)), a] \to
\dots \to [x,a]$ is an admissible loop which we will call $\beta$;
observe that all points  $\xi(f^i(x))$ belong to $P$ and that the
over-rotation number of this admissible loop is $\frac{m}{n}$. Let
$v_0=\xi(x)$, $v_1=\xi(f(x)), v_2=\xi(f^2(x))$ and so on. Thus, $\beta$
: $[v_0,a] \to [v_1, a] \to [v_2, a ] \to \dots [v_0,a]$ where $v_i\in
P$ and $f(v_i)>_a v_{i+1},$ $0\le i\le n-1$.

If $v_{i+1} \neq f(v_i)$ for some $i$, then $v_{i+1}=f^j(v_i)$ for some
$j > 1$. In this case replace the arrow $ [v_i,a] \to [v_{i+1}, a ]$
with the block $[v_i,a] \to [f(v_i), a] \to [f^2(v_i), a] \to
[f^3(v_i), a] \to \dots \to  [f^{j-1}(v_i), a] \to [v_{i+1}, a ]$. Do
this for every $i$ with $v_{i+1} \neq v_i$ . Then, we get a new
admissible loop $ \gamma$ which is either the fundamental loop of $P$
or its repetition and so the over-rotation number of $\gamma$ must be
equal to that of $P$, that is $\frac{r}{s}$ .

Let us now compare the over-rotation numbers of the admissible loops
$\beta $ and $\gamma$. Define a function $\phi: P \to \{0,1 \}$ so that
$\phi(x)=1$ if $x>a$ and is mapped to the left of $a$ and $\phi(x)=0 $
otherwise. The over-rotation number of $\gamma$ is the weighted average
of the over-rotation number of $\beta$ and inserts. Here by the
over-rotation number of an  insert:  $[v_i,a] \to [f(v_i), a] \to
[f^2(v_i), a] \to [f^3(v_i), a] \to \dots \to [f^{j-1}(v_i), a] \to
[v_{i+1}, a]$ we mean the average of the values of the function $\phi$
at the points $f(v_i), f^2(v_i), \dots f^{j-1}(v_i)$.  Take an insert
$[v_i,a] \to [f(v_i), a] \to [f^2(v_i), a] \to [f^3(v_i), a] \to \dots
\to  [f^{j-1}(v_i), a] \to [v_{i+1}, a ]$  and let its over-rotation
number be $t$. Since $f(v_i) >_a v_{i+1}$ and the code for $P$ is
non-decreasing, then $L(v_{i+1}) - L(v_i) \ge 0$. Hence, $t\le
\frac{r}{s}$. Thus, the over-rotation number of every insert is at most
$\frac{r}{s}$.

The over-rotation number $\frac{r}{s}$ of $\gamma$ is the weighted
average of the over-rotation number $\frac{m}{n}<\frac{r}{s}$ of
$\beta$ and the over-rotation number of inserts each of which is at
most $\frac{r}{s}$, a contradiction that implies the claim.
\end{proof}

\begin{figure}[H]
	\caption{\textbf{\textit{A unimodal very badly ordered periodic orbit of over-rotation pair  $(2,6)$ }}}
	\centering
	\includegraphics[width=1\textwidth]{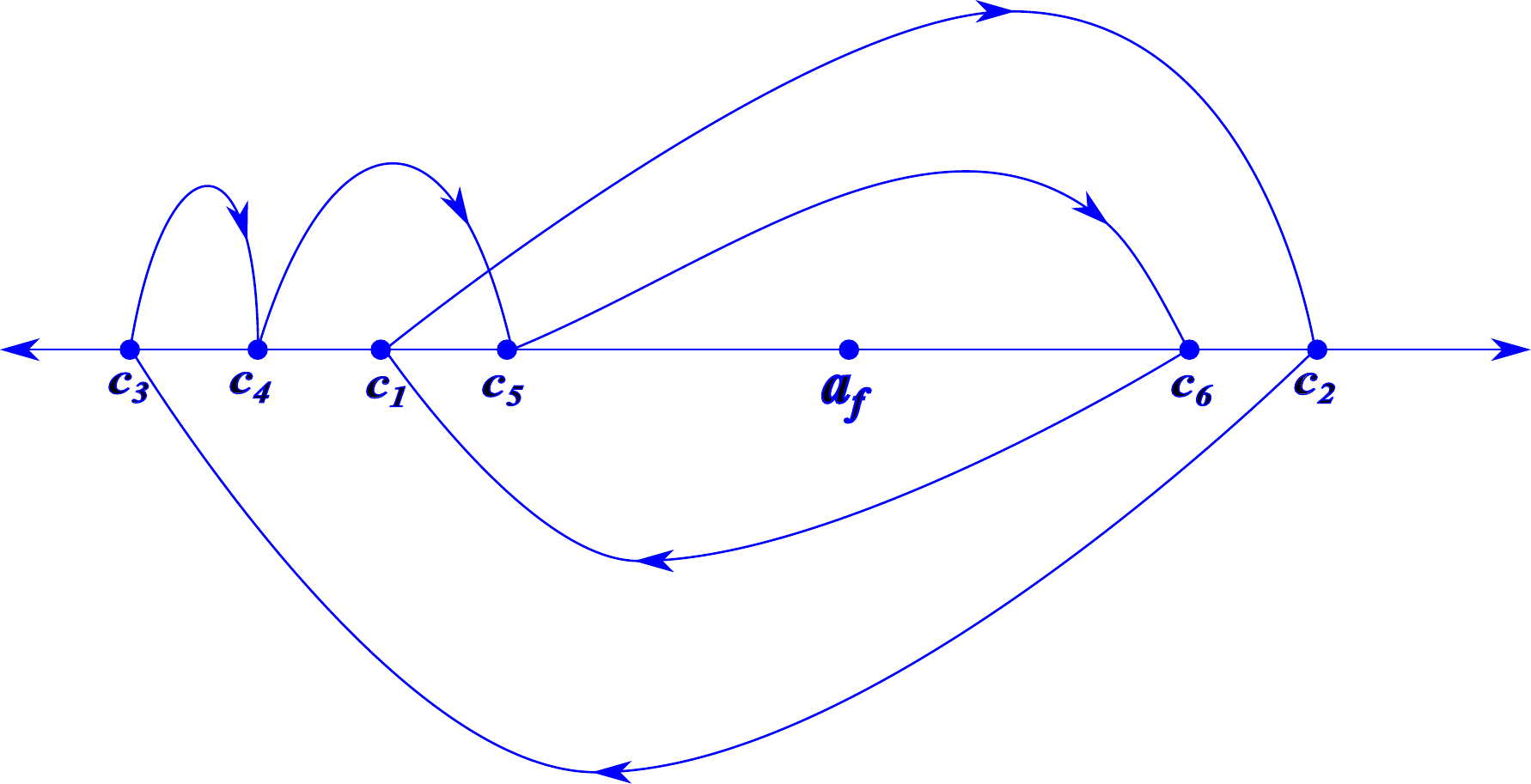}
\label{f:badly}
\end{figure}

Lemma \ref{code:nondec} and Lemma \ref{code:nonmono} immediately imply Corrollary \ref{c:alt-badly}.

\begin{cor}\label{c:alt-badly}
Let $\pi$ be a convergent pattern. Then $r_\pi=\rho(\pi)$ if and only if the code of $\pi$ is non-decreasing.
\end{cor}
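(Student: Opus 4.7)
The plan is to deduce this corollary directly from the two preceding lemmas, after splitting the backward implication into the coprime and non-coprime subcases.

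For the forward direction, I would argue by contrapositive. Assume the code of $\pi$ is not non-decreasing, i.e.\ there exist $x, y \in P$ with $x >_a y$ but $L(x) > L(y)$. Lemma \ref{code:nonmono} (applied to the $P$-linear map $f$) then yields an over-rotation pair $(l, k)$ with $\frac{l}{k} < \rho(P) = \rho(\pi)$ such that $I_f \supset [\frac{l}{k}, \frac12]$. In particular $r_\pi \le \frac{l}{k} < \rho(\pi)$, so $r_\pi \ne \rho(\pi)$.

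For the backward direction, assume the code of $\pi$ is non-decreasing and write the over-rotation pair of $\pi$ as $(nr, ns)$ with $\gcd(r,s) = 1$, so that $\rho(\pi) = \frac{r}{s}$. If $n = 1$ (the coprime case), then by Lemma \ref{noncoprime:ovp} no two distinct points of $P$ lying on the same side of $a$ can share a code value; combined with the non-decreasing hypothesis this upgrades the code to monotone in the sense of the paper, so by Theorem \ref{BM4} $\pi$ is an over-twist, and then $r_\pi = \rho(\pi)$ by the property of over-twists recalled in the introduction. If instead $n > 1$ (the non-coprime case), Lemma \ref{code:nondec} applies verbatim to the $P$-linear map and gives that $P$ forces no cycle of over-rotation number strictly less than $\frac{r}{s} = \rho(\pi)$, hence $r_\pi \ge \rho(\pi)$; the reverse inequality $r_\pi \le \rho(\pi)$ is standard (recalled in the introduction after the definition of $I_f$), so equality holds.

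There is no real obstacle here since all the content has already been isolated in Lemmas \ref{code:nonmono}, \ref{noncoprime:ovp}, \ref{code:nondec}, and Theorem \ref{BM4}; the only care needed is to split on coprimality in the backward direction, because Lemma \ref{code:nondec} is stated with the hypothesis $n > 1$, so the coprime subcase must be handled by the over-twist characterization rather than by a direct appeal to that lemma.
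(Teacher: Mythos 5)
Your proof is correct and follows essentially the same route as the paper, which simply asserts that Lemmas \ref{code:nonmono} and \ref{code:nondec} ``immediately imply'' the corollary without further comment. Your extra care is warranted: as you observe, Lemma \ref{code:nondec} is stated only for non-coprime over-rotation pairs ($n>1$), so the coprime subcase of the backward implication genuinely needs a separate argument, and routing it through Lemma \ref{noncoprime:ovp} (to upgrade non-decreasing to strictly monotone) and then Theorem \ref{BM4} (over-twist implies $r_\pi=\rho(\pi)$) is exactly the right way to close that gap.
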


Example \ref{e:badly} describes a very badly ordered pattern of over-rotation pair $(2, 6)$.

\begin{example}[see Figure \ref{f:badly}]\label{e:badly}
Consider the pattern $\pi $ defined by the permutation
$\pi=(1, 2, 4, 5, 3, 6, 1)$. It is unimodal pattern with over-rotation pair
$(2, 6)$. If we compute out the codes of the points of the periodic
orbit, we will see that the code is monotonically non-decreasing. But,
$\pi$ does not have a block structure over an over-twist periodic orbit.
\end{example}

Suppose that $f:\uc\to \uc$ has a cycle $P$ of classical rotation pair
$(mp, mq)$ which does not have a block structure over a rotation by
$p/q$. Then by \cite{alm98} the rotation interval of $f$ contains $p/q$
in its interior. 	 This shows the difference in properties of certain
periodic orbits on the interval and on the circle.

\section{Strongest unimodal pattern corresponding to a given over-rotation interval}

Our purpose now is to show that very badly ordered interval patterns
$\pi$ do exist. By Corollary \ref{c:non-cop} if $\pi$ is a very badly
ordered pattern then $orp(\pi)=(np, nq)$ with $p, q$ coprime and $n>1$.
We will construct a unimodal very badly ordered pattern for any numbers
$n, p, q$ satisfying the above conditions as well as inequality $2p<q$
(one can show that if $2p=q$ then the pattern $\pi$ must have a block
structure of a periodic orbit of period $2$). In this section we do
some preparatory work by studying unimodal maps. To this end we use
kneading sequences.

\subsection{The strongest kneading sequence associated with a given over-rotation interval}

To describe the kneading sequence associated with $\gamma_\rho$ (and
following from its description given right before Theorem
\ref{weak:knead}) we need to introduce new notation. Consider the shift
$\zeta_{\rho}$ by $\rho $ on $[0,1]$ modulo 1 assuming $\rho <
\frac{1}{2}$.  Define the kneading sequence $\nu_{\rho}= \{
\nu_{\rho}(0), \nu_{\rho}(1), \dots \} $ as follows. For each $n\in
\mathbb{N} $, define:

 \begin{equation}\label{e:knead-twist}
 \nu_{ \rho }(n)=
 \begin{cases}

 C & \text{if $ \zeta_\rho^{n}(\rho)=0 $}\\
 R & \text{if $ 0<\zeta_\rho^{n}(\rho)<2\rho $}\\
 L & \text{if $\zeta_\rho^{n}(\rho) \geq 2\rho$}\\

 \end{cases}
 \end{equation}

It is easy to see that $\nu_\rho$ is the kneading sequence associated
with $\gamma_\rho$ (we leave to the reader a straightforward
verification of this fact).

By Theorem \ref{weak:knead} $\nu_\rho$ is the least (the weakest)
kneading sequence of a unimodal map $f$ such that $\rho \in I_{f}$.
Evidently, if $s, t \in \mathbb{Q}$ and $s<t$, then $\nu_{s} \succ
\nu_{t}$. Now, we will find the strongest kneading sequence of a
unimodal map $f$ such that $I_f=[\rho, \frac{1}{2}]$.

Consider the case when $\rho $ is rational. Let $\rho = \frac{p}{q}$.
Take a rational number $s<\frac{p}{q}$ 
very close to $\frac{p}{q}$. Then $\nu_s\succ \nu_{\frac{p}{q}}$.
As $s$ 
is getting closer and closer to $\frac{p}{q}$, we get better and better
idea about the strongest kneading sequence corresponding to the over-rotation interval $[\frac{p}{q},\frac{1}{2}]$. The greatest (strongest)
kneading sequence $\nu_{\rho}'= (\nu_{\rho}'(0), \nu_{\rho}'(1), \nu_{\rho
}'(2),\dots)$ of a unimodal map $f$ with $I_f=[\rho,
\frac{1}{2}]$ is then given by the limit of $\nu_s$
as $s$ approaches $\frac{p}{q}$ from the left.
Observe that the equation \ref{e:knead-twist} applies to the kneading sequences $\nu_s$, and use this fact in order to give an explicit description of $\nu_{\rho }'$ (recall that $s<\rho, s\to \rho$).


Indeed, by continuity it is easy to see that $\nu_s(i)=\nu_\rho(i),$ $0\le i\le q-2$; in particular, $\nu_s(0)=\nu_\rho(0)=R$ while $\nu_s(1)=\nu_\rho(1)=L$. On the other hand, it follows that, while $\zeta_\rho^{q-1}(\rho)=0$ and, therefore, $\nu_\rho(q-1)=C$, the point $\zeta_s^{q-1}(s)$ belongs to $[2s, 1)$ (and is located slightly to the left of $1$); this implies that $\nu_s(q-1)=L$. Then $\zeta^q_\rho(\rho)=\rho$ and, hence, $\nu_\rho(q)=R$; again, by continuity, $\zeta_s^q(s)$ is located slightly to the left of $\zeta^q_\rho(\rho)=\rho<2s$ and,  hence, $\nu_s(q)=R$. On the next step $\zeta_\rho^{q+1}=2\rho$ which, as we saw before, yields $\nu_\rho(q+1)=L$. However, $\zeta_s^{q+1}(s)$ is located slightly to the left of not only $2\rho$, but also $2s$ implying that $\nu_s(q+1)=R$. From this moment the dynamics of points and their mutual location will be repeat over and over (as $s\to \rho$, the number of repetitions will grow to infinity). This justifies the following explicit description of the kneading sequence $\nu'_\rho$:

\begin{enumerate}
	\item $\nu_{\rho}'(i)=\nu_{\rho}(i)$ for $i=0, 1, 2, \dots,
q-2$ (by continuity);
	\item $\nu_{\rho}'(q-1)=L$ while $\nu_\rho(q-1)=C$;
    \item $\nu_{\rho }'(q)=\nu_{\rho}(q)=R$;
    \item $\nu_\rho'(q+1)=R$ while $\nu_\rho(q+1)=L$;
    \item $\nu_\rho'(j)=\nu_\rho'(j-q)$.
\end{enumerate}

In short, the kneading sequence $\nu'_\rho$ is obtained from the kneading sequence $\nu_\rho$ as follows. First, we keep the initial two symbols $RL$.
From the next place on the kneading sequence $\nu'_\rho$ is a periodic adjustment of $\nu_\rho$ done as follows: each occurrence of the fragment $CRL$ in $\nu_\rho$ is replaced by $LRR$ in $\nu'_\rho$.



\subsection{The dynamics associated with $\nu'_\rho$}\label{ss:strong-dyn}

The dynamics of the corresponding unimodal map can be described as
follows. First consider a unimodal cycle $P$ which exhibits the unimodal over-twist pattern $\gamma_{\frac{p}{q}}$ of over-rotation number $\frac{p}{q}$ and denote the $P$-linear map by $f$. Let
$c$ be a unique critical point of $f$; for brevity let $c_i$ stand for $f^i(c)$. To construct,$\gamma_{\rho}'$ we add some points and define a new map on them so that together with $P$ they will form a new (non-periodic) pattern $\gamma_{\rho}'$ associated with the kneading sequence $\nu_\rho'$.

First, let us choose a point $c'$ slightly to the left of $c$ so that $c'$ belongs the $P$-basic interval with the right endpoint $c$. We define a map $g$ at $c'$ so that $g(c')=c_1'>f(c)$. Then we define $g$ at $c'_1$ so that $g(c_1')=c_2'<c_2$ (the points $c'_1$ and $c'_2$ correspond to the initial non-repetitive $RL$ of the kneading sequence $\nu'_\rho$). Note, that the map $f$ maps the point $c_2$ to the $p+1$-st point of $P$ counting from the left (we are now using spatial labeling). Next we define the map $g$ at $c_2'$ so that $g(c'_2)$ is the $p$-th point of $P$ (again, counting from the left and using spatial labeling). Otherwise, i.e. on the set $P$, we set $g=f$.
Clearly, this gives rise to a non-periodic unimodal pattern $\pi$ of the map $g$ restricted on $P\cup \{c', c_1', c_2'\}$.

Let us show that the kneading sequence of $\pi$ is $\nu_\rho'$. Observe that $c_1'$ is the point at which $g$ reaches its maximum. Hence when describing the itinerary of a point under $g$ we need to compare the location of a current point of $P$ and the point $c_1'$. Now, in the kneading sequence $\nu_\rho$ the symbol $\nu_\rho(2)$ is associated with the point $c_3$. By the description of the unimodal over-twist patterns the point $c_3$ is the $p+1$-st point (counting from the left) of $P$, and the kneading sequence $\nu_\rho$ from the symbol $\nu_\rho(2)$ is simply the itinerary of $c_3$. In the kneading sequence associated with $P$ the corresponding to place $\nu_\rho(2)$ place is occupied by the $p$-th point $z$ (counting from the left) of $P$. We need to compare the itinerary of $c_3$ under $g$ (coinciding with $f$ on $P$) with the itinerary of $z$ under $g$. Observe that $z<c_3$ are adjacent points of $P$.

By the properties of the unimodal over-twist patterns the points $c_3$ and $z$ for a while stay adjacent and are both located either to the left of $c_1'$ or to the right of $c_1'$ as we apply growing powers of $g$ to them. However at the moment when $c_3$ maps to $c$ this changes. To see how it changes, observe that on our pair of points $\{z, c_3\}$ the map $g$ acts so that they enter $[c', c_1']$ an even number of times. Hence at the time when $c_3$ maps to $c$ the corresponding iteration of $g$ is monotonically increasing on $\{z, c_3\}$. It follows that at this point the $g$-image of $z$ is to the left of $c'$. Thus, the corresponding symbol in the kneading sequence associated with $g$ must be $L$ as in $\nu'_\rho$. Then it easily follows from the dynamics of unimodal over-twist patterns that the next two symbols in the kneading sequence of $g$ will be $RR$, again as in $\nu'_\rho$. Because of periodicity it follows that $\nu'_\rho$ is, indeed, the kneading sequence associated with $g$ and the pattern $\pi$. Hence $\pi=\gamma'_\rho$ is the desired pattern of the strongest unimodal map $F$ with $I_F=[\frac{p}{q}, \frac12]$.

The construction of the kneading sequence $\nu'_\rho$ implies that for
the corresponding map $f$ we have $I_f\supset [\frac{p}{q},
\frac{1}{2}].$ On the other hand, suppose that $I_f\ne [\frac{p}{q},
\frac{1}{2}].$ Then $f$ must have a periodic orbit of over-rotation
number $s<\frac{p}{q}$ which by construction implies that its kneading
sequence must be stronger $\nu'_\rho$, a contradiction.

Together with the results of \cite{BS} this implies the next theorem.

\begin{theorem}
If $f:[0,1]\to [0,1]$ is unimodal, then $I_{f}=[\frac{p}{q},
\frac{1}{2}]$ if and only if $\nu(f)$ satisfies
$\nu_{\frac{p}{q}}'\succ \nu(f) \succ \nu_{\frac{p}{q}}$. 	
\end{theorem}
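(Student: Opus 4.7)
The plan is to combine Theorem~\ref{weak:knead} with the monotonicity $\nu_s\succ\nu_t$ for rationals $s<t$ and the construction of $\nu'_{p/q}$ as the left-limit $\lim_{s\uparrow p/q}\nu_s$ from the preceding subsection. The key observation, recorded right after Theorem~\ref{weak:knead}, is that for any coprime rational $\rho$ and any unimodal map $g$ one has $\rho\in I_g$ if and only if $\nu(g)\succ\nu_\rho$ (since $\nu_\rho$ is the weakest kneading sequence whose map forces $\gamma_\rho$). With these tools, the theorem reduces to manipulating kneading-sequence inequalities via the dictionary between membership in the over-rotation interval and the shift ordering of itineraries.

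For sufficiency, the plan is to assume $\nu'_{p/q}\succ\nu(f)\succ\nu_{p/q}$. The right-hand inequality immediately yields $p/q\in I_f$, so $r_f\le p/q$. If one had $r_f<p/q$, I would pick a coprime rational $s$ with $r_f\le s<p/q$; then $s\in I_f$ would give $\nu(f)\succ\nu_s$, while $s<p/q$ together with the left-limit construction of $\nu'_{p/q}$ would give $\nu_s\succ\nu'_{p/q}$. By transitivity this produces $\nu(f)\succ\nu'_{p/q}$, contradicting the left assumption. Hence $r_f=p/q$ and $I_f=[p/q,1/2]$.

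For necessity, the plan is to assume $I_f=[p/q,1/2]$. Then $p/q\in I_f$ immediately gives $\nu(f)\succ\nu_{p/q}$. For every coprime rational $s<p/q=r_f$ we have $s\notin I_f$, so by the total ordering of itineraries $\nu_s\succ\nu(f)$. Passing $s\uparrow p/q$ and invoking the left-limit characterization of $\nu'_{p/q}$, this yields $\nu'_{p/q}\succ\nu(f)$, completing the chain.

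The one technical point I expect to be the main obstacle is the limit passage in the last step, where a family of strict inequalities $\nu_s\succ\nu(f)$ for $s$ cofinal below $p/q$ must be converted into the single inequality $\nu'_{p/q}\succ\nu(f)$. I would handle this by contradiction: if instead $\nu(f)\succ\nu'_{p/q}$ strictly, let $n$ be the first position where they disagree; the explicit description of $\nu'_{p/q}$ given just above the theorem (items (1)--(5) in the construction) shows that, by taking a coprime $s<p/q$ sufficiently close to $p/q$, the sequence $\nu_s$ can be forced to agree with $\nu'_{p/q}$ on positions $0,\dots,n$. Then $\nu(f)$ differs from $\nu_s$ at position $n$ in exactly the same way as it differs from $\nu'_{p/q}$, producing $\nu(f)\succ\nu_s$ and hence $s\in I_f$, which contradicts $s<r_f$. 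This continuity-type argument is the only place where the detailed structure of $\nu'_{p/q}$ enters the proof in an essential way.
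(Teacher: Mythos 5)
Your proposal is correct and follows essentially the same route as the paper: Theorem~\ref{weak:knead} gives the dictionary between $\rho\in I_f$ and $\nu(f)\succ\nu_\rho$, the monotonicity $s<t\Rightarrow\nu_s\succ\nu_t$ controls the over-rotation interval, and the left-limit characterization of $\nu'_{p/q}$ closes both directions. The one place you go beyond the paper's terse wording is in carefully spelling out the finite-prefix contradiction that converts the family of strict inequalities $\nu_s\succ\nu(f)$ (for $s<p/q$) into $\nu'_{p/q}\succ\nu(f)$; that is exactly the technical step the paper leaves implicit, and your treatment of it is sound.
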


Figure \ref{f:strongest} illustrates the dynamics of the pattern
$\gamma'_{\frac{2}{5}}$.

\begin{figure}[H]
	\caption{\textbf{\textit{The strongest pattern corresponding to the over-rotation interval $[\frac{2}{5},\frac{1}{2}] : \gamma_{\frac{2}{5}}'$ }}}
	\centering
	\includegraphics[width=0.7\textwidth]{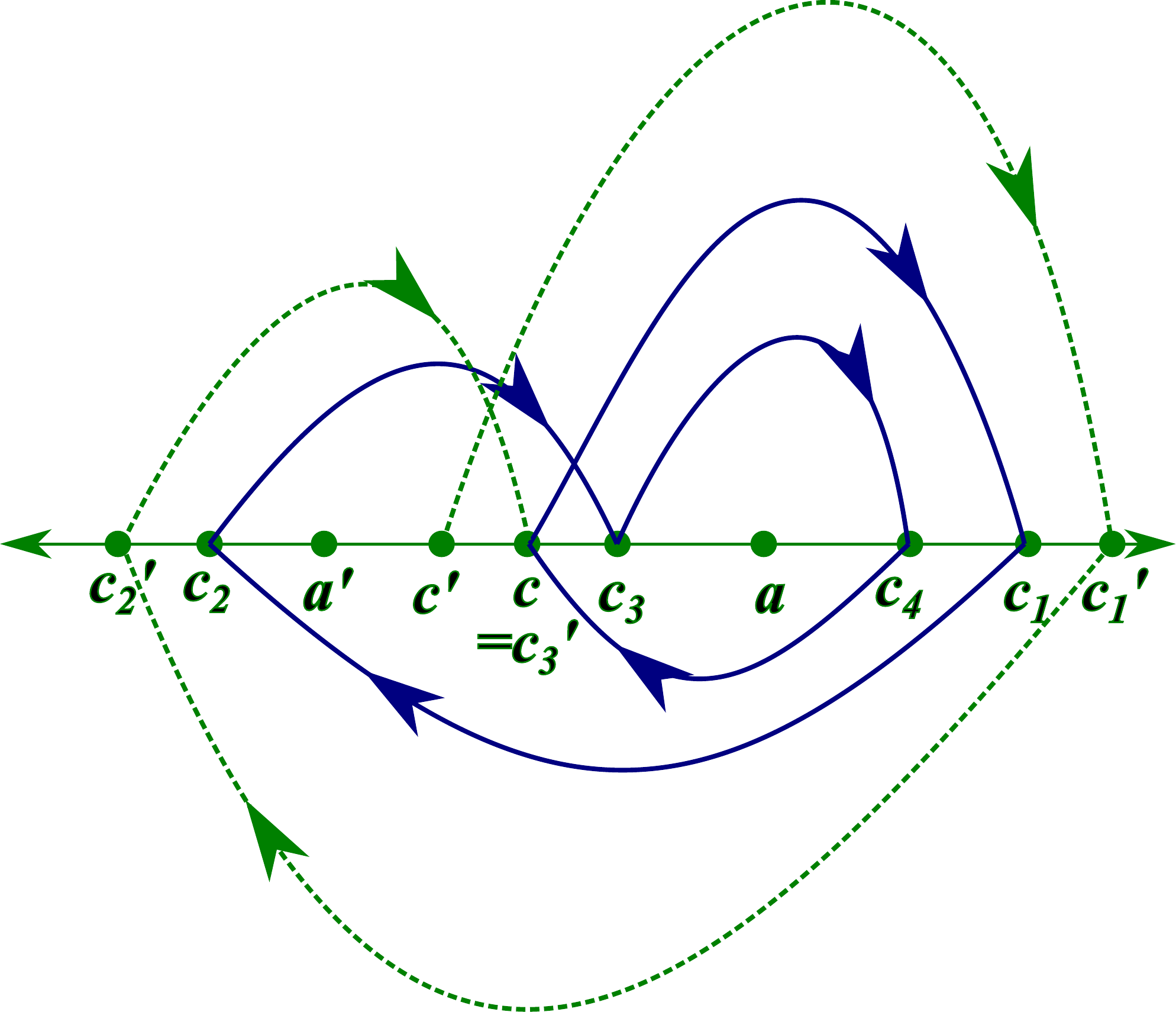}
\label{f:strongest}
\end{figure}

\section{Unimodal very badly ordered periodic orbits of arbitrary non-coprime over-rotation pair with over-rotation number less than $\frac{1}{2}$} 

It follows from the above that any very badly ordered cycle must have non-coprime over-rotation pair of over-rotation number less than $\frac{1}{2}$. In this section we show that very badly ordered cycles exist for \emph{any} non-coprime over-rotation pair with over-rotation number less than $\frac{1}{2}$. The description of the strongest unimodal over-twist pattern $ \gamma_{\rho}'$ corresponding to the over-rotation interval $[\rho, \frac{1}{2}]$ obtained in Section 4 is instrumental in the justification of our construction.

Suppose $k, p, q\in \mathbb{N}$, $g.c.d(p,q)=1$ and $k>1$. We want to construct a badly ordered periodic orbit of over-rotation pair $(kp,kq)$.
First we construct $k$ periodic orbits $P_1$, $P_2$, $\dots P_k$ with the following properties.

\begin{enumerate}
\item  $P_1=\{c_{11}, c_{12}, \dots, c_{1q}\}$, $P_2=\{c_{21}, c_{22}, \dots, c_{2q}\}$, $\dots,$  $P_k=\{c_{k1},$ $c_{k2},$ $\dots,$ $c_{kq}\}$ each of which exhibits pattern $\gamma_{\frac{p}{q}}$ and suppose the orbits have temporal labeling, that is, $ f(c_{ij}) = c_{i(j+1)}$ for $i \in \{1,2,\dots, k\}$ and $j\in \{1, 2, \dots, q\}$. Also, let critical points of the orbits $P_1$, $P_2,$ $\dots,$ $P_k$ be $c_{11}, c_{21}, c_{31}, \dots, c_{k1}$ respectively, and set $\mathcal P=P_1\cup \dots \cup P_k$.
\item

The orbits are placed  in such a manner that the $\mathcal P$-linear map which we call $f$ is unimodal and $c_{11}$ is the point of absolute maximum of $f$ and for every $i \in \{1, 2, \dots, q \}$, $c_{1i}>_a c_{2i}>_a \dots>_a c_{ki}$. Moreover,
iterations $f, f^2, \dots, f^q$ are monotone when restricted on
$[c_{11}, c_{k1}]$. The structure of over-twist patterns implies that
$f^q$ is increasing on $[c_{11}, c_{k1}]$, hence the construction is
consistent. We call $\mathcal P$ with the map $f$ defined as above the \emph{$k$-tuple lifting} of $\gamma_{\frac{p}{q}}$.
\end{enumerate}

The second step in our construction is to change the map $f$ to form a new map $g$ so that the $k$ cycles are glued into one periodic orbit under our new map $g$ while the map $g$ remains unimodal and the code of the resulting cycle is non-decreasing. Let $c_{1l},$ $c_{2l},$ $\dots,$ $c_{kl}$ be the $p$-th points (counting from the left) in the orbits $P_1$, $P_2$, $\dots P_k$ respectively (now we are using the spatial labelling). Let the new map $g$ acting on $\mathcal P$ be defined as follows.

First, we set $g(c_{13})= c_{2l}$ (the leftmost point of the outermost
orbit $P_1$ will be mapped to the $p$-th point, in the spatial
labelling, of $P_2$). This means that the point $c_{2(l-1)}$ that used
to be mapped to $c_{2l}$ under $f$ will have to be mapped elsewhere. We
set $g(c_{2(l-1)})=c_{3l}$, i.e. we move the $f$-image of $c_{2(l-1)}$
one step to the right. Thus, the point $c_{3(l-1)}$ that used to be
mapped to $c_{3l}$ under $f$ has to be mapped elsewhere. We set
$g(c_{3(l-1)})=c_{4l}$. In other words, each point $c_{j(l-1)}$ that
used to be mapped to $c_{jl}$ by $f$ has its image shifted to the right
by one and, thus, will be mapped by $g$ to the point $c_{(j+1)l}$ which
is one step to the right of its ``old image'' $c_{jl}$. This
construction applies to any $j=2, \dots, k$. In particular, on the last
step in the construction the point $c_{k(l-1)}$ that used to be mapped
to $c_{kl}$ by $f$ must have its image shifted to the right by one
which brings the $g$-image of this point to $c_{14}$, the point of
$P_1\cup \dots \cup P_k$ located immediately to the right of $c_{kl}$
and coinciding with the $f$-image of $c_{13}$. On other points of
$\mathcal P$ the maps $f$ and $g$ are the same. In this way we merge
the $f$-orbits $P_1, \dots, P_k$ into one $g$-orbit $\mathcal P$.

We claim that $g|_{\mathcal P}$ is the desired unimodal very badly ordered periodic orbit of over-rotation pair $(kp, kq)$. Indeed, it immediately follows from the construction that $g|_{\mathcal P}$ is unimodal and has the over-rotation pair $(kp, kq)$. Hence by Theorem \ref{BM3} $I_g\supset [\frac{p}{q}, \frac{1}{2}]$. To prove that the kneading sequence of $g$ is weaker than $\nu'_{\frac{p}{q}}$ we use the description of the dynamics associated with $\nu'_{\frac{p}{q}}$ given in Subsection \ref{ss:strong-dyn}.
Denote the kneading sequence of $g$ by $\nu_g$. Then the straightforward verification shows that the $\nu'_{\frac{p}{q}}$ and $\nu_g$ coincide until the symbol number $kq-1$ in both sequences. In $\nu_g$ this symbol is $C$ whereas in $\nu'_{\frac{p}{q}}$ this symbol is $L$. However one can easily check that before this moment there is an odd number of symbols $R$ in both sequences. Thus, $\nu'_{\frac{p}{q}}\succ \nu_g$ as desired.

Observe that a simple direct computation also shows that the code of $g$ is non-decreasing. Hence by Lemma \ref{code:nondec} one can also see that $I_g=[\frac{p}{q}, \frac{1}{2}]$.

\begin{example}
We now illustrate the above algorithm by constructing a very badly
ordered periodic orbit of over-rotation pair $(kp, kq)$ where $k=2$, $p=2$ and $q=5$, that is, of over-rotation pair $(4, 10)$. To this end, let us take two periodic orbits $P_1=\{c_1, c_2,$ $\dots, c_5\}$ and $P_2=\{d_1, d_2\dots, d_5\}$ each of which exhibits pattern $\gamma_{\frac{2}{5}}$. Give the orbits the temporal labeling, that is, $f(c_{i})=c_{i+1}$ and $f(d_{i})=d_{i+1}$ for $i\in \{1,2,\dots 5\}$. Let the critical points of the orbits $P_1$ and  $P_2$ be $c_1$ and $d_1$ respectively. Let $\mathcal{P}=P_1\cup P_2$ and $f$ be the $\mathcal{P}$-linear map. We place the periodic orbits $P_1$ and $P_2$ in such a manner that $f$ has a unique critical point $c_1$ which is the absolute maximum of $f$ and for every $i \in \{1, 2, \dots , 5\}$, $c_i>_a d_i$. Moreover, iterations $f, f^2, \dots, f^5$ are monotone when restricted to $[c_1,d_1]$ and $f^5$ is increasing on $[c_1, d_1]$. The set $\mathcal{P}$ with the map $f$ as defined above will be called the \emph{doubleton lifting} of the cycle $\gamma_{\frac{2}{5}}$; it is shown in Figure \ref{fig:double} with $P_1$ represented by the 
dotted line and $P_2$ by the 
smooth line.

\begin{figure}[H]
	\caption{\textbf{\textit{Construction of doubleton lifting of the over-twist periodic orbit of over-rotation number $\frac{2}{5}$ }}}
	\centering
	\includegraphics[width=1\textwidth]{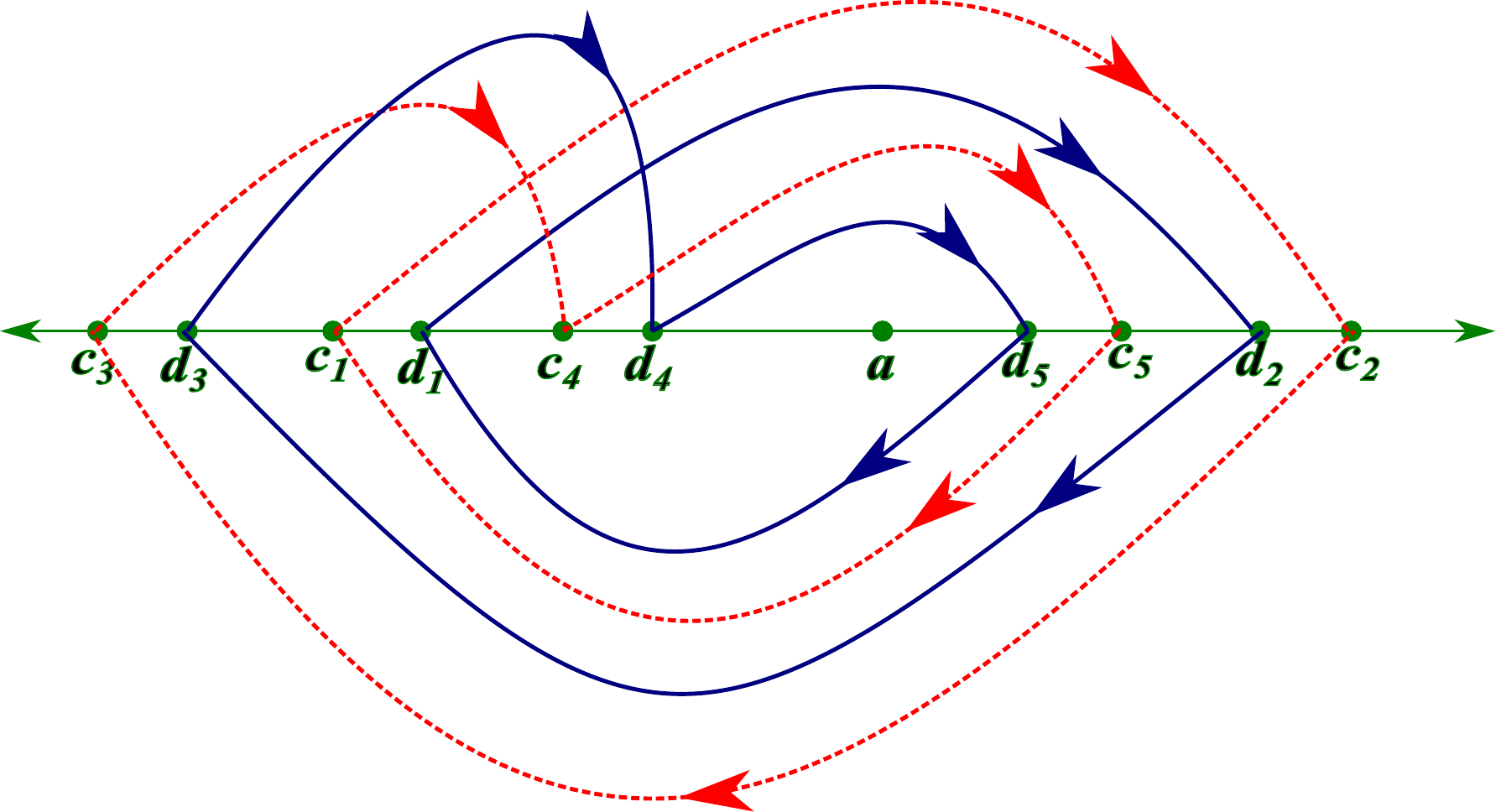}
	\label{fig:double}
\end{figure}

Let us change the map $f$ to obtain a new map $g$ so that the two cycles $P_1$ and $P_2$ are mingled under the map $g$ but $g$ remains unimodal and the code of the composite periodic orbit $g|_\mathcal{P}$ is non-decreasing. For this, we note that here the numerator of our over-rotation number is $p=2$ and $d_1$ is the second point in the orbit $P_2$ counting from the left (here we are using spatial labelling). We define our map $g$ as follows: $g(c_3)=d_1$ (the leftmost point of $P_1$ is mapped to the second point from the left in the spatial labelling of $P_2$). This means that the point $d_5$ which used to be  mapped to $d_1$ under $f$ originally will now have to be mapped somewhere else under $g$. Set $g(d_5)=c_4$. In other words, the point $d_5$ which used to be mapped to $c_1$ has its image shifted by $1$ place to the right. On all other points we keep the actions of the maps $f$ and $g$ exactly the same. In this way, the $f$ orbits $P_1$ and $P_2$ gets amalgamated into one $g$ orbit $\mathcal{P}$.

\begin{figure}[H]
	\caption{\textbf{\textit{Construction of very badly ordered periodic orbit of over-rotation pair $(4, 10)$ obtained from the doubleton lifting }}}
	\centering
	\includegraphics[width=1\textwidth]{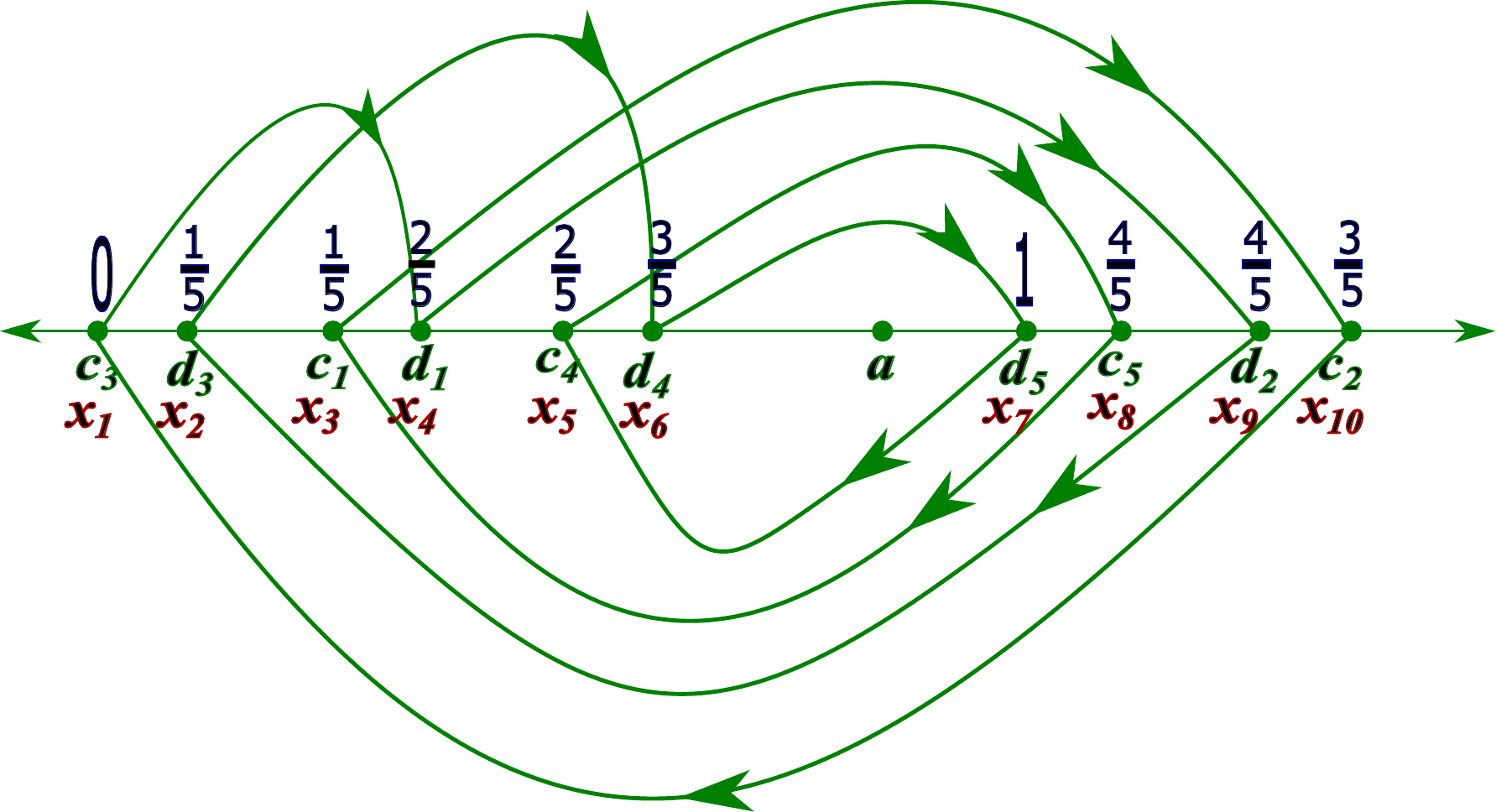}
	\label{exam:badly}
\end{figure}

We now verify that $g|_\mathcal{P}$  is indeed a unimodal very badly ordered periodic orbit of over-rotation number $(4, 10)$. First note that it clearly follows from the construction that $g|_\mathcal{P}$ is unimodal and has over-rotation pair $(4, 10)$. Thus, by Lemma \ref{BM3}, $I_g \supset [\frac{2}{5}, \frac{1}{2}]$. Now, to show that $I_g$ equals $[\frac{2}{5}, \frac{1}{2}]$, we compute out the kneading sequence of $g|_\mathcal{P}$. Let $\nu_g$ be the kneading sequence of $g$. We see that $\nu_g:$ $(R, L, $  $R, R, L,$ $R, R, $ $R, R, C, $ $R, L, $ $R, R, L, $ $R, R,$ $R, R, C, \dots)$  where the fragment $(R, L, $ $R, R, $ $L, R, $ $R, R, $ $R, C)$  is \emph{recurrent}. On the other hand the strongest kneading corresponding to the over-rotation interval $[\frac{2}{5}, \frac{1}{2}]$ is $\nu'_{\frac{2}{5}}=(R, L, $  $R, R, L, R, R, $  $R, R, L, R, R, $  $\dots)$ where there is a \emph{non-repetitive} fragment $(R L) $ in the beginning and then the fragment $ (R, R, L, R, R)$ which is \emph{periodic} and repeats itself again and again. Comparing $\nu_g$ and $\nu'_{\frac{2}{5}}$ we see that they are the same through the symbol number $9$. In the symbol number $10$, we have $C$ in $\nu_g$ and $L$ in  $\nu'_{\frac{2}{5}}$. Observe, that among the first common $9$ symbols of $\nu_g$ and $\nu'_{\frac{2}{5}}$, $R$ occurs $7$ times. Since, $C > L $ in the ordering of symbols, and since $R$ occurs an odd number of times before that, it follows that  $\nu'_{\frac{2}{5}}\succ \nu_g$ as desired. Thus, $\nu'_{\frac{2}{5}}\succ \nu_g \succ \nu_{\frac{2}{5}}$, and it follows that $I_g = [\frac{2}{5}, \frac{1}{2}]$.

Another way to substantiate this fact is by computing out the code of the orbit $g|_\mathcal{P}$. Let us rename the points of $g|_\mathcal{P}$  by spatial labelling as $\{x_1, x_2, \dots x_{10} \}$ (see Figure \ref{exam:badly} on which the new notation for the points is shown in the bottom line of symbols). We set the code of the leftmost point to be equal to $0$, that is,  $L(x_1)=0$, and compute out the codes of the remaining points of the orbit (these codes are shown in the top line of symbols on Figure \ref{exam:badly}). Simple computation gives us : $L(x_2)=\frac{1}{5}$, $L(x_3)=\frac{1}{5}$, $L(x_4)=\frac{2}{5}$,
$L(x_5)=\frac{2}{5}$, $L(x_6)=\frac{3}{5}$, $L(x_7)=1$, $L(x_8)=\frac{4}{5}$,
$L(x_9)=\frac{4}{5}$, $L(x_{10})=\frac{3}{5}$. Notice that the fixed point $a$ of $g$ is located between $d_4$ and $d_5$. Thus, the code for the periodic orbit $g|_\mathcal{P}$ is non-decreasing. Hence, by Lemma \ref{code:nondec},  $I_g = [\frac{2}{5}, \frac{1}{2}]$. This verifies our claim in two different ways.
\end{example}


\begin{thebibliography}{99999999}
	
	\bibitem[ALM98]{alm98} L. Alseda, J. Llibre, and M. Misiurewicz,
	\emph{Badly ordered cycles of circle maps}, Pacific J. Math.
	\textbf{184} (1998), 23--41.
	
	\bibitem[ALM00]{alm00} Ll. Alsed\`{a}, J. Llibre and M. Misiurewicz,
	\emph{Combinatorial Dynamics and Entropy in Dimension One},
	Advanced Series in Nonlinear Dynamics (2nd edition) \textbf{5} (2000),
	World Scientific Singapore (2000)
	
	\bibitem[Bal87]{Ba} S.Baldwin, \emph{Generalisation of a theorem of
		Sharkovsky on prbits of continous real valued functions,} Discrete
	Math. \textbf{67} (1987), 111--127.
	
	
	\bibitem[BM97]{BM1} A. Blokh, M. Misiurewicz, \emph{A new order for
		periodic orbits of interval maps}, Ergodic Theory and Dynamical Sys.
	\textbf{17}(1997), 565-574
	
	\bibitem[BM99]{BM2} A. Blokh, M. Misiurewicz, \emph{Rotating an interval
		and a circle}, Trans. Amer. Math. Soc. \textbf{351}(1999), 63--78.
	
	\bibitem[BS13]{BS} A. Blokh, K. Snider, \emph{Over-rotation numbers for
		unimodal maps,} Journal of Difference Equations and Aplications
	\textbf{19}(2013), 1108--1132.

	\bibitem[BK98]{bk98} J. Bobok and M. Kuchta, \emph{X-minimal orbits for
		maps on the interval}, Fund. Math. \textbf{156}(1998), 33--66.
	
	\bibitem[MN90]{MN} J. Bobok and M. Kuchta, \emph{Combinatorial Patterns for
		maps of the interval}, Mem. Amer. Math. Soc. \textbf{456}(1990) 		
	
	\bibitem[Sha64]{S} A. N. Sharkovsky, \emph{Coexistence of the cycles of
		a continuous mappimg of the line into itself}, Ukraine Mat. Zh.
	\textbf{16}(1964), 61--71 (Russian).
	
	
	
\end{thebibliography}
\end{document}